\newtheorem{theorem}{Theorem}
\newtheorem{definition}{Definition}
\newtheorem{lemma}{Lemma}
\newtheorem{corollary}{Corollary}
\newtheorem{proposition}{Proposition}
\newtheorem{remark}{Remark}
\newcommand{\C}{\mathbb C }
\def\T{\mathcal T}
\def\T2{\mathcal{AV}2}
\def\TE{\mathcal TE}
\def\M{\mathcal M}
\def\P{\mathcal P}
\def\E{\mathcal E}
\def\CC{\mathbb C}
\begin{document}

\title[Bounded Geometry and Characterization]{Bounded Geometry and Characterization
of Some Transcendental Entire and Meromorphic Maps}

\author{Tao Chen, Yunping Jiang, and Linda Keen}

\begin{abstract}
We define two classes of topological infinite degree   covering maps modeled on two families of transcendental holomorphic maps.  The first, which we call exponential maps of type $(p,q)$, are branched covers and is modeled on transcendental entire maps of the form 
$P e^{Q}$, where $P$ and $Q$ are polynomials of degrees $p$ and $q$. The second is the class of universal covering maps from the plane to the sphere with two removed points modeled on transcendental meromorphic maps with two asymptotic values.  The problem we address is to give a combinatorial characterization of  the holomorphic maps contained in these classes whose post-singular sets are finite. 
  The main results in this paper are that a post-singularly finite topological exponential map of type $(0,1)$ or a certain post-singularly finite topological exponential map of type $(p,1)$ or a post-singularly finite universal covering map  from the plane to the sphere with two points removed is combinatorially equivalent to a holomorphic same type map if and only if this map has {\em bounded geometry}.
\end{abstract}
\maketitle

\section{Introduction}
\label{sec:intro}

Thurston proved  that a post-critically finite degree $d\geq 2$  branched covering of
the sphere, with hyperbolic orbifold, is either combinatorially equivalent
to a rational map or there is a  topological obstruction, now called a ``Thurston obstruction'' (see~\cite{T,DH,Ji} for definition). 
The proof uses an iteration scheme defined for an appropriate Teichm\"uller space. As presented in~\cite{DH}, the proof
is divided into two steps.  For any initial point in the Teichm\"uller space, the iteration scheme gives a sequence of points 
in the Teichm\"uller space. This sequence reduces to a corresponding sequence in the corresponding moduli space. 
The first step is to prove that the map is combinatorially equivalent to a rational map is equivalent to that the corresponding sequence is contained in a compact subset in the moduli space. The second step is to prove that 
the corresponding sequence is contained in a compact subset in the moduli space is equivalent to that there is no ``Thurston obstruction''.
Both the proofs depend essentially on the finiteness of degree and the finiteness of the dimension of the Teichm\"uller space. 
The reason that the proof of the first step depends on the finiteness of degree essentially is the key lemma~\cite[Lemma 5.2]{DH} in the proof, while the proof of this key lemma depends on the finiteness of degree essentially.  This key lemma guarantees that the sequence in the Techm\"uller space is contained in a compact subset in the Teichm\"uller space provided the corresponding sequence in the moduli space is contained in a compact subset in the moduli space. Thus although Thurston's iteration scheme is well defined for topological transcendental maps, the proof of the first step as well as the proof of the second step as presented in~\cite{DH} can not applied to this case due to the infiniteness of degree.  
Another framework to prove the first step is outlined in~\cite{Ji} for branched coverings of finite degree. This framework avoids the key lemma~\cite[Lemma 5.2]{DH} in the proof.  Therefore, it does not depend on the finiteness of degree.  The main purpose of this paper is to show that this framework works for topological transcendental maps too. 

More precisely, in this paper we define two topological classes of covering maps of the plane.  The first,  called
{\em topological exponential maps of type $(p,q)$} and denoted by $\TE_{p,q}$, where $p\geq 0$ and $q\geq 1$ are  branched covers.  
We follow the framework given in~\cite{Ji} to study the  problem of combinatorially characterizing an entire map 
$P e^{Q}$, where $P$ are $Q$ are polynomials of degrees $p$ and $q$, using an {\em iteration scheme defined by Thurston} and
a {\em bounded geometry condition}.  The second, which we treat in a similar way,  is  the topological class of universal covering maps from the plane to the sphere with two removed points;  we call the elements  topological transcendental maps with two asymptotic values and we denote the space by $\T2$. Note that $\TE_{0,1}\subset \T2$.
We first show that  an element $f \in {\TE}_{p,q}\cup \T2$ with finite post-singular set
is combinatorially equivalent to a holomorphic same type map if and only if
it has bounded geometry and satisfies a {\em compactness condition}. 
Thus to complete the characterization, we only need to check that  bounded 
geometry actually implies  compactness --- we show this for some $f\in \TE_{p,q}$ and for $f\in \T2$. More precisely, we first prove    

\medskip
\begin{theorem}~\label{main1}
A post-singularly finite map $f$ in $\TE_{p,q}\cup \T2$  is combinatorially equivalent to a post-singularly finite entire map of the form $E=Pe^{Q}$ or a post-singularly finite meromorphic map with two asymptotic values  if and only if it has bounded geometry and satisfies the compactness condition. The realization is unique up to conjugation by an affine map of the plane.
\end{theorem}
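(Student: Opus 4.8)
The plan is to realize both implications through Thurston's iteration scheme, following the framework of~\cite{Ji} rather than that of~\cite{DH}, so that no appeal to the degree is ever made. Fix the finite post-singular set $P_f$ and let $\mathcal{T}$ be the Teichm\"uller space of the plane (compactified to the sphere by the value at infinity) marked at $P_f$, with $\mathcal{M}$ the associated moduli space and $\pi\colon \mathcal{T}\to\mathcal{M}$ the natural projection. Because $f$ is a branched covering of infinite degree, every complex structure on the target pulls back along $f$ to a unique complex structure making $f$ holomorphic of the prescribed type; this induces the Thurston pullback map $\sigma_f\colon \mathcal{T}\to\mathcal{T}$. The structural facts I would record first are that $\sigma_f$ is well defined and continuous, that it does not increase the Teichm\"uller metric and in fact strictly decreases the distance between any two distinct points (by Royden's theorem applied on the source, using that $f$ is a nontrivial covering), and that $[\mu]\in\mathcal{T}$ is fixed by $\sigma_f$ exactly when $f$ is holomorphically conjugate, by a map respecting the marking, to an entire map $Pe^{Q}$ or to a meromorphic map with two asymptotic values. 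Thus the whole theorem reduces to the existence and uniqueness of a fixed point of $\sigma_f$.

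For necessity, suppose $f$ is combinatorially equivalent to such a holomorphic map $E$. The equivalence is implemented by a pair of quasiconformal homeomorphisms $(\phi_0,\phi_1)$, isotopic rel $P_f$, with $\phi_0\circ f=E\circ\phi_1$. The class $[\phi_1]\in\mathcal{T}$ is then a fixed point of $\sigma_f$, and I would read off the two conditions from the genuine holomorphic model $E$: the post-singular points of $E$ occupy a fixed configuration of definite size, so transporting this configuration back through the uniformly quasiconformal markings along the orbit yields uniform lower bounds on mutual distances together with uniform control of the local shapes, which is precisely bounded geometry; and the orbit, being attracted to the fixed point, projects into a compact part of $\mathcal{M}$, which is the compactness condition.

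For sufficiency I would start the iteration at an arbitrary base point $\tau_0$ and study the orbit $\tau_n=\sigma_f^{\,n}(\tau_0)$. The compactness condition supplies a subsequence whose images $\pi(\tau_{n_k})$ converge in $\mathcal{M}$, so the underlying punctured conformal structures do not degenerate. The crux is to promote this to convergence in $\mathcal{T}$ itself, that is, to show that the markings do not escape to infinity (no unbounded twisting) along the subsequence. This is exactly the step where~\cite{DH} invokes its degree-dependent key lemma; here I would instead use bounded geometry, which pins down the isotopy classes of the marked configurations and forces $\tau_{n_k}$ to remain in a compact subset of $\mathcal{T}$. Passing to a convergent subsequence $\tau_{n_k}\to\tau_\ast$ and combining precompactness of the orbit with strict contraction gives a fixed point: the distances $d(\tau_n,\tau_{n+1})$ decrease to a limit $L$, and strict contraction applied at $\tau_\ast$ forces $L=0$, so $\sigma_f(\tau_\ast)=\tau_\ast$, whose realization is the desired holomorphic map.

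The hard part is precisely this promotion from $\mathcal{M}$-compactness to $\mathcal{T}$-compactness without a degree bound, and it is where bounded geometry carries the entire argument in the infinite-degree setting; verifying that the two a priori conditions together control both the conformal type and the marking is the technical heart of the proof. Uniqueness is the cleaner half: any two holomorphic maps of the given type that are combinatorially equivalent to $f$ furnish fixed points of $\sigma_f$, and strict contraction shows the fixed point is unique, so the two realizations can differ only by the residual symmetry of the normalization, namely an affine map of the plane, which completes the proof.
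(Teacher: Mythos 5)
Your skeleton matches the paper's: reduce everything to a fixed point of the pullback map $\sigma_f$ on $T_f$, get necessity for free because the orbit of the fixed point is constant, and get uniqueness from strict contraction. But the core of the sufficiency argument has a genuine gap. You propose to use compactness to control the orbit in moduli space and then to ``promote'' this to precompactness of $\{\tau_{n}\}$ in $T_f$ itself, with bounded geometry ``pinning down the isotopy classes'' to prevent unbounded twisting. No argument is given for this promotion, and the proposed mechanism does not work: bounded geometry is a statement about the configurations $P_{f,n}=w^{\mu_n}(P_f)$ (equivalently, lower bounds on lengths of closed geodesics on $R_{\tau_n}$), i.e.\ about the image of the orbit in moduli space. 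A sequence can sit over a single point of moduli space and still leave every compact subset of $T_f$ (compose the marking with growing powers of a Dehn twist), so bounded geometry plus compactness does not by itself confine the markings. This promotion is exactly the content of the degree-dependent key lemma of Douady--Hubbard that the paper's introduction explains cannot be used here; asserting it from bounded geometry is assuming what must be proved.

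The paper's route avoids $T_f$-precompactness of the orbit entirely. It works on the cotangent spaces: $\sigma^*$ acts on integrable quadratic differentials with simple poles on $P_{f,n}$ as the push-forward (transfer operator) by $E_n=w^{\mu_n}\circ f\circ (w^{\mu_{n+1}})^{-1}$, and $\|E_{n*}\tilde q\|<\|\tilde q\|$ strictly because $E_n$ has infinite degree while the differentials have finitely many poles. The compactness condition (on the maps $E_n$ in $\E_{p,q}$, not merely on the moduli points) together with bounded geometry (which keeps the pole configurations and hence the unit balls of maximizing differentials in a compact family) lets one pass to limits and conclude that $a_{\tau_0}=\sup_n\|(E_n)_*q_{n,\max}\|<1$ is a \emph{uniform} bound. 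This yields $d_T(\tau_{n+1},\tau_n)\le a\,d_T(\tau_n,\tau_{n-1})$ with a fixed $a<1$, so the orbit is Cauchy and converges to a fixed point with no appeal to precompactness of the orbit. Relatedly, your justification of strict contraction (``Royden's theorem on the source'') is too thin: the strict inequality is precisely the infinite-degree counting argument on poles of the pushed-forward differential, and it is the same quantitative estimate, made uniform via compactness and bounded geometry, that closes the proof. If you replace your promotion step by this coderivative estimate, the rest of your outline (including the uniqueness argument) goes through.
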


We then prove our main theorems,  that bounded geometry implies the  compactness hypothesis holds in certain cases.  The first one is 

\medskip
\begin{theorem}~\label{main2}
A post-singularly finite map $f$ in $\TE_{p,1}$, $p\geq 1$, with only one non-zero simple branch point $c$ such that either $c$ is periodic or $c$ and $f(c)$ are both not periodic, is combinatorially equivalent to a unique post-singularly finite entire map of the form $ \alpha z^{p}e^{\lambda z}$, where $\alpha=(-\lambda/p)^{p}e^{- \lambda (-p/\lambda)^{p}}$, respectively,  if and only if it has  bounded geometry.  
\end{theorem}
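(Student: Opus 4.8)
The plan is to deduce the theorem from Theorem~\ref{main1} by verifying, for the maps of $\TE_{p,1}$ in question, that \emph{bounded geometry implies the compactness condition}; the other half of the ``only if'' direction — that a realizable map has bounded geometry — and the uniqueness of the realization up to affine conjugacy are already contained in Theorem~\ref{main1}, so everything reduces to this single implication. I first fix notation for the Thurston iteration. At the $n$-th stage the combinatorial data of $f$ is realized by a normalized holomorphic map $E_{n}(z)=\alpha_{n}z^{p}e^{\lambda_{n}z}$, in which the origin is the fixed asymptotic value and, for $p\geq 2$, the superattracting fixed critical point of local degree $p$, while the only free critical point is the simple one at $c_{n}=-p/\lambda_{n}$; the coefficient $\alpha_{n}$ is pinned by the normalization so that the free critical value is prescribed and $\lambda_{n}$ is the single modulus. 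Since the origin is a fixed singular value, the post-singular set is $\{0\}$ together with the forward orbit of the critical value $v=E_{n}(c_{n})$, and the compactness condition amounts to the statement that $\{\lambda_{n}\}$, equivalently $\{c_{n}\}$, stays in a compact subset of $\CC^{*}$.

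There are exactly two ways for $\{\lambda_{n}\}$ to leave such a set: $\lambda_{n}\to 0$, so that $c_{n}\to\infty$ and the free critical point escapes, or $\lambda_{n}\to\infty$, so that $c_{n}\to 0$ and the free critical point collides with the fixed point at the origin. I would rule out the first uniformly. As $\lambda_{n}\to 0$ the normalizing coefficient $\alpha_{n}\to 0$, so the image $E_{n}(v)$ of the critical value tends to the asymptotic value $0$; whenever $v$ is not fixed — which holds in every case covered by the hypothesis except the trivial subcase $c=v$ of a fixed branch point, where $\lambda$ is already determined — this image is a post-singular point distinct from $0$, and its collision with $0$ contradicts the lower bound on mutual distances of post-singular points supplied by bounded geometry. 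Hence $\lambda_{n}\not\to 0$.

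The second possibility, $\lambda_{n}\to\infty$, is where the dichotomy enters and is the heart of the matter. When $c$ is periodic, $c$ itself lies in the post-singular set; since $c_{n}=-p/\lambda_{n}$ is then an honest post-singular point, bounded geometry keeps it both away from the post-singular point $0$ and of bounded modulus, so $\lambda_{n}$ cannot tend to $\infty$ and is forced into a compact subset of $\CC^{*}$. When instead $c$ and $f(c)$ are both non-periodic, $c$ is \emph{not} post-singular, the orbit of $v$ is strictly pre-periodic, and it lands on a cycle $\Gamma$ that contains neither $0$ nor $c$ and is therefore repelling for the post-singularly finite (sub-hyperbolic) map; bounded geometry keeps the points of $\Gamma$ separated, bounded, and away from $0$. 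Here bounded geometry does not control the free critical point $c_{n}\to 0$ directly, so I would argue by contradiction through a rescaling/normal-families limit: writing $v_{k}$ for the successive images of $v$, the essential singularity at $\infty$ forces, through the relations $v_{k+1}=E_{n}(v_{k})$, a rigid chain of logarithmic balances between $\mathrm{Re}\,\lambda_{n}$ and the positions of the $v_{k}$; following these balances around the repelling cycle $\Gamma$ produces either a collision among post-singular points or escape toward the asymptotic value $0$, in either case contradicting bounded geometry.

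The main obstacle is exactly this last step: controlling the non-post-singular critical point $c_{n}$ from purely post-singular data in the presence of the essential singularity and the single finite asymptotic value. This is precisely the feature that makes the finite-degree argument of \cite[Lemma 5.2]{DH} inapplicable, and it is handled instead by the infinite-degree framework of \cite{Ji}. Once both degenerations are excluded, $\{\lambda_{n}\}$ is precompact in $\CC^{*}$, the compactness condition holds, and Theorem~\ref{main1} yields a unique realization $E=\alpha z^{p}e^{\lambda z}$. The dichotomy is exactly what makes one of the two control mechanisms available, and the excluded configuration — $c$ non-periodic but $f(c)$ periodic — is the one in which $\lambda$ is underdetermined by the post-singular configuration and both realizability and uniqueness may fail.
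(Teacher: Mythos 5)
Your reduction to ``bounded geometry implies compactness,'' your treatment of the periodic case (where $c_n=-p/\lambda_n$ is itself a post-singular point, so bounded geometry pins $|\lambda_n|$ away from $0$ and $\infty$ directly), and your lower bound on $|\lambda_n|$ (via the collision of the image of the critical value with the asymptotic value $0$ as $\lambda_n\to 0$) all agree with the paper. The genuine gap is exactly the step you yourself flag as ``the main obstacle'': the upper bound on $|\lambda_n|$ when $c$ and $f(c)$ are both non-periodic. Your sketch --- a rescaling/normal-families limit tracking ``logarithmic balances'' around the repelling cycle --- is not an argument; nothing in it quantifies how the positions of finitely many post-singular points force $|\lambda_n|$ to stay bounded, and no purely metric statement about $P_{f,n}$ can do so, because the relations $v_{k+1}=E_n(v_k)$ determine $\lambda_n$ only up to the choice of which of the infinitely many preimage branches is taken; some discrete combinatorial datum must be shown to be invariant along the iteration to fix that choice.

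The paper supplies precisely such a datum, which your proposal never uses: a \emph{topological constraint} in the form of a winding number. Since $f(c)$ is not periodic, $k_1\geq 1$, so the two points $c_{k_1}$ and $c_{k_2}$ of the pre-periodic tail with $f(c_{k_1})=f(c_{k_2})$ both lie in $P_f$; fixing a curve $\gamma$ joining them, the image $\delta=f(\gamma)$ is closed, and its winding number $\eta$ about $0$ is shown in Lemma~\ref{winding1} to be unchanged under the Thurston iteration. The computation
$$
2\pi i\,\eta=\int_{\gamma_{n+1}}\frac{p}{z}\,dz+\lambda_n\,\bigl(c_{k_2,n+1}-c_{k_1,n+1}\bigr)
$$
then converts the invariance of $\eta$, together with the bounded-geometry estimates $\kappa\leq|c_{k_i,n+1}|\leq K$ and $|c_{k_2,n+1}-c_{k_1,n+1}|\geq\kappa$, into the explicit bound $|\lambda_n|\leq\bigl(2\pi\eta+\log K-\log\kappa+4\pi\bigr)/\kappa$. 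This is the missing content of your proof, and it also explains the excluded configuration more precisely than ``$\lambda$ is underdetermined'': if $c$ is not periodic but $f(c)$ is, then $k_1=0$ and the endpoint $c_{k_1}=c$ is not post-singular, so bounded geometry gives no lower bound on $|c_{k_2,n+1}-c_{k_1,n+1}|$ and the winding-number identity yields no upper bound on $|\lambda_n|$.
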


This result is the first one to use the Thurston iteration scheme to characterize a transcendental entire map with critical points and is thus completely new.  Therefore, we  give a detailed proof of Theorem~\ref{main2}. 

The second is  the content of our paper~\cite{CJK}.

\medskip
\begin{theorem}~\label{main3}
A post-singularly finite map $f$ in $\T2$ is combinatorially equivalent
to a post-singularly finite transcendental meromorphic function $g$ with constant Schwarzian derivative if and only if it has bounded geometry.
The realization is unique up to conjugation by an affine map of the plane.
\end{theorem}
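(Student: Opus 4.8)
The plan is to derive Theorem~\ref{main3} from Theorem~\ref{main1}. Since the latter already characterizes realizability of a post-singularly finite map in $\T2$ by \emph{bounded geometry together with the compactness condition}, it suffices to prove that for such maps \emph{bounded geometry alone implies the compactness condition}; the ``only if'' implication is then immediate, because a combinatorial equivalence to a holomorphic map with two asymptotic values already yields bounded geometry by Theorem~\ref{main1}. Before carrying this out I would record the identification of the target class: a holomorphic element of $\T2$ is a universal covering $\CC\to\hat{\CC}\setminus\{a_1,a_2\}$, that is, a meromorphic function with exactly two asymptotic values and no critical points. Solving the equation $S(g)=\mathrm{const}$ shows that these are precisely the maps $g=M\circ e^{\mu z}$ with $M$ M\"obius, so that ``holomorphic in $\T2$'' and ``transcendental meromorphic with constant Schwarzian derivative'' describe the same functions; uniqueness up to affine conjugacy is inherited verbatim from Theorem~\ref{main1}.

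To set up the reverse implication I would run the Thurston iteration of~\cite{Ji}. From an initial marked conformal structure the pullback operator $\sigma_f$ produces a sequence $\tau_n$ in the Teichm\"uller space of $(\hat{\CC},P_f)$, where the post-singular set $P_f$ is finite, and this sequence projects to a sequence $[\tau_n]$ in the finite-dimensional moduli space of configurations of $P_f$. After normalizing so that the realization is determined up to an affine map of the plane, compactness of $\{[\tau_n]\}$ is equivalent to non-degeneration of this configuration, i.e.\ to a uniform upper bound on the moduli of all essential annuli of $\hat{\CC}\setminus P_f$ along the sequence; a degeneration would mean that two points of $P_f$, possibly one of them an asymptotic value, collide.

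The core of the proof is to show that bounded geometry forbids such degeneration. Bounded geometry supplies uniform two-sided control on the combinatorial sizes attached to the iterates of $f$, and the task is to convert this into the required uniform upper bound on the moduli of the essential annuli. Here I would use the defining feature of $\T2$, namely that $f$ is modeled on the universal covering $\CC\to\hat{\CC}\setminus\{a_1,a_2\}$, which is conformally $e^{\mu z}$ post-composed with a M\"obius map. Because this covering has controlled distortion --- lengths and moduli in the base are comparable to those of their preimages upstairs --- the bounded-geometry bounds transfer to comparable bounds on the conformal moduli that measure the separation of the points of $P_f$, uniformly in $n$. This yields the compactness condition, and Theorem~\ref{main1} then produces the holomorphic realization, completing the argument.

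The hard part, and the reason this case needs the separate treatment of~\cite{CJK}, is the behavior near the two asymptotic values. These are the two ends of the universal cover, so infinitely many preimage pieces accumulate there; controlling the conformal position of a post-singular point that lies close to an asymptotic value --- and ruling out that the infinite degree allows such a point to drift into $a_1$ or $a_2$ --- is where the estimates are most delicate. I expect that working in the flat cylinder metric on $\hat{\CC}\setminus\{a_1,a_2\}\cong\CC^{*}$, for which the exponential model is a covering isometry from the plane and the two asymptotic values become the two infinite ends, is the device that makes the propagation estimate near the asymptotic values go through.
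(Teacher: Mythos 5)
Your reduction to Theorem~\ref{main1} and your identification of the holomorphic elements of $\T2$ with the solutions of $S(g)=\mathrm{const}$ are exactly what the paper does, and the ``only if'' direction and the uniqueness statement are handled the same way. The gap is in the core step, converting bounded geometry into the compactness condition. First, note that the compactness condition is not the statement that the configurations $P_{f,n}=w^{\mu_n}(P_f)$ stay in a compact part of moduli space --- that is essentially the \emph{definition} of bounded geometry and there is nothing to prove there. It is the statement that the holomorphic maps $g_n=w^{\mu_n}\circ f\circ(w^{\mu_{n+1}})^{-1}=g_{\alpha_n,\beta_n}$ produced by the iteration have parameters $(\alpha_n,\beta_n)$ confined to a compact subset of $\CC^*\times\CC^*$. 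Bounded geometry gives the lower bounds (e.g.\ $|\beta_n|\ge\kappa$, because $g_n(1)$ and $1$ are distinct points of $P_{f,n}$ and cannot collide), but it does not by itself give the upper bound on $|\beta_n|$: since $e^{\beta_n z}$ is infinite-to-one, two post-singular points $c_{k_1,n+1}$ and $c_{k_2,n+1}$ with the same image can lie arbitrarily many fundamental domains apart, so $|\beta_n|$ could a priori blow up while every pair of points of $P_{f,n}$ stays spherically separated. Your appeal to ``controlled distortion'' of the universal covering begs this question, because the distortion of $z\mapsto e^{\beta z}$ on a fixed region is not uniform in $\beta$; the uniformity is precisely the bound you are trying to establish.

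The missing idea is the paper's \emph{topological constraint}. One fixes, once and for all, a curve $\gamma$ joining two pre-periodic points $c_{k_1}$, $c_{k_2}$ of the singular orbit that have a common image, so that $\delta=f(\gamma)$ is a closed curve, and records its homotopy class $\eta\in\pi_1(\hat{\CC}\setminus\{0,\lambda\})\cong\ZZ$ (a winding number). A short computation with the commuting diagram shows $\delta_n=g_n(\gamma_{n+1})=w^{\mu_n}(\delta)$, so $\eta$ is invariant under the Thurston iteration. The residue calculation
$$
2\pi i\,\eta=\oint_{\tilde\delta_n}\frac{dw}{w}=\int_{\gamma_{n+1}}\beta_n\,dz=\beta_n\,(c_{k_2,n+1}-c_{k_1,n+1}),
$$
combined with the bounded-geometry lower bound $|c_{k_2,n+1}-c_{k_1,n+1}|\ge k$, yields $|\beta_n|\le 2\pi\eta/k$, and a similar argument bounds $\alpha_n$. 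This invariant integer is exactly the device that resolves the branch-of-logarithm ambiguity you would otherwise face (Remark~\ref{rmk3}) and is what makes ``bounded geometry alone'' suffice in the theorem statement: the constraint is automatically carried by $f$, not an extra hypothesis. Without it, your argument does not close.
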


Because the proof of this theorem  is similar to the proof of Theorem~\ref{main2}, we include an outline of it here.  
The reader who is interested in the full  proof  is referred  to~\cite{CJK}. 

\medskip
Our techniques involve adapting the Thurston iteration scheme to our situation.  We work with a fixed normalization.   The proof of Theorem~\ref{main1}  applies to an arbitrary post-singularly finite map  in either $ \TE_{p,q}$ or  $ \T2$.  It shows that bounded geometry together with the  assumption of compactness implies the convergence of the iteration scheme to an entire or meromorphic map of the same type (see section \ref{sec:suff}). Its proof involves  an analysis of  quadratic differentials associated to the functions in  the iteration scheme.   To prove Theorems~\ref{main2} and~\ref{main3}, which apply to the special cases, we need a topological constraint (see section~\ref{sec:proofmt}).   We then prove  that   bounded geometry together with the topological constraint implies compactness. 
 
 \medskip
Finally, we mention a remark about the second step in Thurston's iteration scheme. 
It is still a problem to prove that  the corresponding sequence is contained in a compact subset 
in the moduli space is equivalent to that there is no ``Thurston obstruction'' for 
topological exponential maps of type $(p,q)$ and for topological transcendental maps 
with two asymptotic values. In a recent paper~\cite{HSS}, Hubbard, Schleicher, and Shishikura used 
a Levy cycle, a very special type ``Thurston obstruction'', to investigate whether
the corresponding sequence is contained in a compact subset in the moduli space for 
a topological exponential map of type $(0,1)$. The full answer for the second step by using ``Thurston obstruction'' 
is still open, even for this special case.

\medskip
The paper is organized as follows. In \S2, we review the covering properties of $(p,q)$-exponential maps $E=Pe^{Q}$.
In \S3, we define the family $\TE_{p,q}$ of $(p,q)$-topological exponential maps $f$.
In \S4, we review the properties of meromorphic maps with two asymptotic values. In \S5, we describe the space of universal covering maps from the Eucildean plane to the sphere with two  removed points. In \S6,  we define  combinatorial equivalence between post-singularly
finite maps in ${\mathcal TE}_{p,q}$ or  $\T2$ and prove  there is a local quasiconformal map in every combinatorial equivalence class. 
In \S7, we define the Teichm\"uller space $T_{f}$ for a post-singularly
finite $(p,q)$-topological exponential map $f$ or a post-singularly finite map in $\T2$. 
In \S8, we introduce the induced map $\sigma_{f}$ from  the Teichm\"uller space $T_{f}$ into itself;  this is the crux of the Thurston iteration scheme.
In \S9, we define the concept of  ``bounded geometry''  and in \S10 we prove the necessity of the bounded geometry condition.
 In \S11, we give the proof of sufficiency assuming compactness for any post-singularly finite map $f$ in either ${\mathcal TE}_{p,q}$ or $\T2$.
 In \S12, we define a topological constraint for the maps in Theorem~\ref{main2} and Theorem~\ref{main3}; this involves defining markings and the winding number of a  homotopy class.   We prove that the winding numbers and the homotopy classes are unchanged under iteration of the map $\sigma_f.$ Furthermore, in \S13
 we prove that  bounded geometry together with the topological constraint implies  compactness. This completes the proofs of Theorem~\ref{main2} and Theorem~\ref{main3}.

\medskip
 \medskip
{\bf Acknowledgement:} This work is partially supported by the PSC-CUNY award program, the CUNY collaborative incentive research grant program, 
the Simons Foundation collaboration award program, the collaboration grant (\#11171121) from the NSF of China, and the Academy of Mathematics and Systems Science and the Morningside
Center of Mathematics at the Chinese Academy of Sciences.

\section{The space $\E_{p,q}$ of $(p,q)$-Exponential Maps}
\label{sec:epq}

\label{pq-exp maps}
We use the following notation:
  ${\mathbb C}$ is the complex plane, $\hat{\mathbb C}$ is the Riemann sphere and
  ${\mathbb C}^{*}$ is the complex plane  punctured at the origin.

A {\em $(p,q)$-exponential map} is an entire function of the form  $E =Pe^{Q}$ where $P$ and $Q$ are  polynomials of degrees $p\geq 0$ and $q\geq 0$ respectively such that  $p+q\geq 1$.  We use the notation ${\mathcal E}_{p,q}$ for the set of $(p,q)$-exponential maps.

Note that if $P(z)=a_0 + a_1 z + \ldots a_pz^p$, $Q(z)=b_0 + b_1 z + \ldots b_q z^q$, $\widehat{P}(z)=e^{b_0} P(z)$ and $\widehat{Q}(z)=Q(z)-b_0$ then
$$P(z) e^{Q(z)} = \widehat{P}e^{\widehat{Q}(z)}.$$
To avoid this ambiguity we always assume $b_0=0$.
If $q=0$, then $E$ is a polynomial of degree $p$.    Otherwise, $E$ is a transcendental entire function with essential singularity at infinity.

The growth rate of an entire function $f$ is defined as
$$
\limsup_{r\to \infty} \frac{\log \log M(r)}{\log r}
$$
where $M(r) =\sup_{|z|=r} |f(z)|$. It is easy to see that the growth rate of $E$ is $q$.

Recall the following definitions: 
\begin{definition}
 Given an entire or meromorphic function $g$, the point $v$ is an {\em asymptotic value} of $g$ if there is a path $\gamma(t)$ such that
 $\lim_{t \rightarrow 1}\gamma(t)=\infty$ and $\lim_{t\rightarrow 1}g(\gamma(t))=v$.  It 
  is a {\em logarithmic singularity for the map $g^{-1}$}
 if there is a neighborhood $U_v$ and a component $V$ of $g^{-1}(U_v \setminus \{v\}) $ such that the map
 $g: V \rightarrow U_v \setminus \{v\}$ is a holomorphic universal covering map. 
    If an asymptotic value is isolated, it is a logarithmic singularity.  (This will always be the case in this paper.) The domain $V$ is called an {\em asymptotic tract for $v$}.
 A point may be an asymptotic value for more than one asymptotic tract.
 An asymptotic value may be an omitted value.
 \end{definition}

\begin{definition}  Given a holomorphic or meromorphic function  $g$, the point $v$ is an {\em algebraic singularity
for the map $g^{-1}$} if there is a neighborhood $U_v$ such that for every component $V_i$
of $g^{-1}(U_v ) $ the map $g: V_i \rightarrow U_v $ is a degree $d_{V_i}$ branched covering map
and $d_{V_i}>1$ for finitely many components $V_{1}, \ldots V_{n}$. For these components,
if $c_i \in V_i$ satisfies $g(c_{i})=v$ then $g'(c_i)=0$;
that is $c_i$ is a {\em critical point of $g$} for $i=1, \ldots, n$ and $v$ is a {\em critical value}.
\end{definition}

An entire or meromorphic function is not a homeomorphism,
it must have at least one, respectively, two singular points  (i.e., critical points and asymptotic values) and, by the big Picard theorem,
no entire function can omit more than one value and no transcendental meromorphic function $g:\C \rightarrow \hat\C$ can omit more than two values.  

For functions $E$ in ${\mathcal E}_{p,q}$ we have

\begin{proposition}
If $q\geq 1$, $E$ has $2q$ distinct asymptotic tracts that are separated by $2q$  rays.  Each tract maps to a punctured neighborhood of either zero or infinity and these are the only asymptotic values.
\end{proposition}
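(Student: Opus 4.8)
The plan is to read the asymptotics directly off the factorization $E=Pe^{Q}$. Write $Q(z)=b_q z^{q}+\cdots$ with $b_q\neq 0$ and put $\beta=\arg b_q$. Since $P$ grows only polynomially, for large $z$ we have $\log|E(z)|=\operatorname{Re}Q(z)+\log|P(z)|$, where $\operatorname{Re}Q(z)\sim|b_q|\,|z|^{q}\cos(q\theta+\beta)$ dominates the $\log|P|$ term away from the zeros of the cosine. First I would introduce the $2q$ rays $\ell_k:\ \theta_k=(\tfrac{\pi}{2}-\beta+k\pi)/q$, $k=0,\dots,2q-1$, along which the leading term of $\operatorname{Re}Q$ vanishes. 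These equally spaced rays cut a neighborhood of $\infty$ into $2q$ open sectors $\Sigma_k$ of opening $\pi/q$ on which $\operatorname{sgn}\cos(q\theta+\beta)$ is constant and alternates. On the $q$ sectors where the cosine is positive, $\operatorname{Re}Q\to+\infty$ and hence $E\to\infty$; on the $q$ sectors where it is negative, $\operatorname{Re}Q\to-\infty$ and, since this beats $\log|P|$, $E\to 0$. This already produces the $2q$ separating rays and identifies the two candidate asymptotic values $0$ and $\infty$.

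Next I would promote each sector to a genuine asymptotic tract. Fix $U_\infty=\{|w|>M\}$ with $M$ large. Because $|E|$ is subharmonic it has no bounded super-level sets, so every component of $E^{-1}(U_\infty)$ is unbounded, and I claim that for $M$ large there are exactly $q$ of them, one eventually contained in each positive sector. On such a component $V$, I would show $E:V\to U_\infty\setminus\{\infty\}$ is a holomorphic universal covering. The tool is the logarithm $\psi=Q+\log P$, a single-valued branch on a truncated sector $\{|z|>R,\ \theta_k<\theta<\theta_{k+1}\}$ (simply connected and free of the zeros of $P$), for which $E=e^{\psi}$. Since $\psi'=Q'+P'/P\sim qb_q z^{q-1}\neq 0$ for large $z$, $\psi$ is locally univalent there; I would show $\psi$ carries $V$ biholomorphically onto a right half-plane type domain $\{\operatorname{Re}\zeta>\log M\}$, using that $\operatorname{Im}Q(z)\sim|b_q|\,|z|^{q}\sin(q\theta+\beta)$ sweeps all of $\mathbb{R}$ as $\theta$ crosses the sector, so that $\operatorname{Im}\psi$ is surjective and $V$ is simply connected. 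As $\zeta\mapsto e^{\zeta}$ is the universal cover of $U_\infty\setminus\{\infty\}$ by that half-plane, $E|_V$ is a universal cover and $V$ is an asymptotic tract for $\infty$. The symmetric argument with $U_0=\{0<|w|<\varepsilon\}$ and the negative sectors yields the $q$ unbounded tracts for $0$.

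Finally I would rule out any other asymptotic value. Suppose $\gamma(t)\to\infty$ with $E(\gamma(t))\to v$ and $0<|v|<\infty$. Then $\operatorname{Re}Q(\gamma(t))=\log|v|-\log|P(\gamma(t))|+o(1)$ stays of size $O(\log|\gamma(t)|)$; but outside any fixed neighborhood of the rays $\ell_k$ one has $|\operatorname{Re}Q(z)|\gtrsim|z|^{q}$, so $\gamma(t)$ must asymptotically approach one of the rays. Along that ray, however, $Q\sim b_q z^{q}$ is essentially purely imaginary, so $\operatorname{Im}Q(\gamma(t))\sim\pm|b_q|\,|\gamma(t)|^{q}\to\pm\infty$ while $\arg P(\gamma(t))$ converges; hence $\arg E(\gamma(t))=\operatorname{Im}Q(\gamma(t))+\arg P(\gamma(t))$ cannot converge modulo $2\pi$, contradicting $E(\gamma(t))\to v\neq 0$. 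Therefore $0$ and $\infty$ are the only asymptotic values.

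The routine part is the ray count and the limits of $|E|$ in the open sectors. The main obstacle is the middle step: verifying that each tract is genuinely a \emph{universal} covering of a punctured neighborhood, rather than merely a region on which $E$ tends to the asymptotic value. This forces one to control $\psi$ up to the sector boundaries, showing that its image is exactly a half-plane type domain and that $\operatorname{Im}\psi$ surjects onto $\mathbb{R}$, and it is here that the simple connectivity of $V$ and the infinite degree of the covering must be established with care.
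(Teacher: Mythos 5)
Your proposal follows the same skeleton as the paper's proof: locate the $2q$ rays on which the leading term of $\Re Q$ vanishes, note that the sign of $\Re Q$ alternates in the $2q$ intervening sectors of opening $\pi/q$, and conclude that $E\to\infty$ or $E\to 0$ according to that sign. The difference is one of completeness rather than of route. The paper stops essentially at that point and dismisses other asymptotic values with the single remark that the complement of the sectors near infinity consists only of the rays; it takes the finer covering structure of $E=Pe^{Q}$ from Zakeri's description of $\E_{p,q}$ rather than proving it inside this proposition. You supply the two pieces that this elides. First, you promote each sector to a genuine logarithmic tract by showing $E|_V$ is a universal covering via $\psi=Q+\log P$ mapping onto a half-plane-type domain; this is the right mechanism, though when you carry it out note that for large $M$ the component of $\{|E|>M\}$ attached to a positive sector is \emph{not} contained in that sector --- it also picks up thin slivers around the bounding rays where $\log|P|$ dominates the vanishing $\Re Q$ --- so ``eventually contained in each positive sector'' needs to be softened to ``asymptotically fills the positive sector.'' Second, and more valuably, you close the real gap in the ``only asymptotic values'' claim: an asymptotic path need not stay in one sector, and your argument that a path with $E(\gamma(t))\to v$, $0<|v|<\infty$, is trapped in an angular neighborhood of a single ray, along which $\Im Q\to\pm\infty$ forces $\arg E(\gamma(t))$ to wind indefinitely, is exactly the missing step (and matches the paper's own remark that along a Julia ray the image spirals infinitely often around the origin). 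The one detail to make explicit there is connectedness: the path lies in the disjoint union of angular neighborhoods of the rays for large $t$, hence in a single one. Both arguments prove the same statement; yours is the self-contained version.
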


\begin{proof}  From the growth rate of $E$ we see that for $|z|$ large, the behavior of the exponential dominates.   Since $Q(z) = b_q z^q + \mbox{ lower order terms} $,  in a neighborhood of infinity there are $2q$ branches of $\Re Q = 0$ asymptotic to equally spaced rays.  In the $2q$ sectors defined by these rays the signs of $\Re Q$ alternate.  If $\gamma(t)$ is a curve
such that $\lim_{t \to \infty} \gamma(t) = \infty$ and $\gamma(t)$ stays in one sector for all large $t$, then either $\lim_{t \to \infty}E(\gamma(t))=0$ or $\lim_{t \to \infty}E(\gamma(t))=\infty$, as $\Re Q$ is negative or positive in the sector.  It follows that there are exactly $q$ sectors that are asymptotic tracts for $0$ and $q$ sectors that are asymptotic tracts for infinity.    Because the complement of these tracts in a punctured neighborhood of infinity consists entirely of these rays, there can be no other asymptotic tracts.  \end{proof}

\begin{remark} The directions dividing the asymptotic tracts  are called {\em Julia rays} or {\em Julia directions} for $E$.  If $\gamma(t)$ tends to infinity along a Julia ray,  $E(\gamma(t))$ remains in a compact domain in the plane.   It spirals infinitely often around the origin.  \end{remark}

Two
$(p,q)$-exponential maps $E_{1}$ and $E_{2}$ are conformally equivalent if they are conjugate by a conformal automorphism $M$ of the Riemann sphere $\hat{\mathbb C}$, that is, $E_{1} =M\circ E_{2}\circ M^{-1}$. The automorphism $M$ must be a M\"obius transformation and it must fix both $0$ and $\infty$ so that it must be the affine stretch  map $M(z)=az$, $a \neq 0$. We are interested in conformal equivalence classes of maps, so by abuse of notation, we treat conformally equivalent $(p,q)$-exponential maps $E_{1}$ and $E_{2}$ as the same.

The critical points of $E=Pe^{Q}$ are the roots of $P'+PQ'=0$. Therefore, $E$ has $p+q-1$ critical points counted with multiplicity which we denote by
$$
\Omega_{E}=\{ c_{1}, \cdots, c_{p+q-1}\}.
$$
Note that if $E(z)=0$ then $P(z)=0$.  This in turn implies that if $c\in \Omega_E$ maps to $0$, then $c$ must also be a critical point of $P$.
Since $P$ has only $p-1$ critical points counted with multiplicity,
there must be at least $q$ points (counted with multiplicity) in $\Omega$ which are not mapped to $0$.
Denote by
$$
\Omega_{E,0}=\{ c_{1}, \cdots, c_{k}\}, \quad k\leq p-1,
$$
 the (possibly empty) subset of  $\Omega_E$ consisting of critical points such that $E(c_i)=0$.  Denote its complement in $\Omega_E$ by
$$
\Omega_{E,1} =\Omega_{E}\setminus \Omega_{E,0}=\{ c_{k+1}, \cdots, c_{p+q-1}\}.
$$

When $q=0$, $E$ is a polynomial. The {\em post-singular set} in this special case is  the same as the {\em post-critical set}. It is defined as
$$
P_{E} =\overline{\cup_{n\geq 1} E^{n}(\Omega_{E})}\cup\{\infty\}.
$$
To avoid trivial cases here we will assume that $\#(P_{E}) \geq 4$.  Conjugating by an affine map $z \to az+b$ of the complex plane,
	we normalize so that  $0, 1\in P_{E}$.

When $q=1$ and $p=0$, $\Omega_{E}=\emptyset$ and $\E_{0,1}$ consists of exponential maps $\alpha e^{\lambda z}$, $\alpha, \lambda \in \CC^*$. 
The {\em post-singular set} in this special case is defined as
$$
P_{E} =\overline{\cup_{n\geq 0} E^{n}(0)} \cup \{\infty\}.
$$
Conjugating by an affine stretch $z \mapsto \alpha z$ of the complex plane,
we normalize so that $E(0)=1$. Note that after this normalization the family takes the form
$e^{\lambda  z}$ , $\lambda \in \CC^{*}$.

When $q\geq 2$ and $p=0$ or when  $q\geq 1$ and $p\geq 1$, $\Omega_{E,1}$ is a non-empty set. Let
$$
{\mathcal V} =E(\Omega_{E,1}) =\{ v_{1}, \cdots, v_{m}\}
$$
denote the set of non-zero critical values of $E$.
The  {\em post-singular set}  for $E$  in  the general case  is now defined as

$$
P_{E} = \overline{\cup_{n\geq 0} E^{n}({\mathcal V}\cup \{ 0 \})} \cup \{\infty\}.
$$
We normalize  as follows:  \\

 If $E$ does not fix $0$, which is always true  if $q\geq 2$ and $p=0$,
we conjugate by an affine stretch $z \rightarrow az$ so that  $E(0)=1$.

If  $E(0)=0$,  there is a  critical point in  $c_{k+1}$ in $\Omega_{E,1}$  with $c_{k+1}\not=0$ and $v_{1}=E(c_{k+1})\not= 0$.
In this case we normalize so that $v_{1}=1$.  The family $\E_{1,1}$ consists of functions of the form  $\alpha z e^{\lambda z}$.
 After normalization  they  take  the form
$$
-\lambda e ze^{\lambda z}.
$$
An important family we consider in this paper is the family in $\E_{p,1}$, $p\geq 1$, where each map in this family 
has only one non-zero simple critical point. After normalization, the functions in this family take the form
$$
E(z)= \alpha z^{p} e^{\lambda z}, \quad \alpha=\Big( -\frac{\lambda}{p}\Big)^{p}  e^{p}.
$$
This is the  family for which we have the strongest new results. 

\section{Topological Exponential Maps of Type $(p,q)$}
\label{sec:Tpq}
We use the notation  ${\mathbb R}^{2}$ for the Euclidean plane.   We define the space ${\TE}_{p,q}$ of {\em topological exponential maps of type $(p,q)$}  with $p+q\geq 1$.    These are branched coverings with a single finite asymptotic value, normalized to be at zero, modeled on the maps in the holomorphic family $\E_{p,q}$.  In~\cite{Z} Zakeri gives a description  of the covering properties for the family $\E_{p,q}$.  Our definition of the covering properties for the family ${\TE}_{p,q}$ is modeled on $\E_{p,q}$.  

If $q=0$, then ${\mathcal TE}_{p,0}$ consists of all topological polynomials $P$ of degree $p$:  these are degree $p$ branched coverings of the sphere such that $f^{-1}(\infty)=\{\infty\}$.

If $q=1$ and $p=0$, the space ${\TE}_{0,1}$ consists of universal covering maps $f: {\mathbb R}^{2}\to {\mathbb R}^{2}\setminus \{0\}$.  These are discussed at length in \cite{HSS}, where they are called topological exponential maps.

The polynomials $P$ and $Q$ contribute differently to the covering properties of maps in $\E_{p,q}$.  As we saw,  the degree of $Q$ controls the growth and behavior at infinity.    To see this we first define  the space ${\TE}_{0,q}$ 
using maps  $e^{Q}$ as our model.

 \begin{definition}
If $q\geq 2$ and $p=0$, the space ${\TE}_{0,q}$ consists of topological branched covering maps $f: {\mathbb R}^{2}\to {\mathbb R}^{2}\setminus \{0\}$ satisfying the following conditions:
\begin{itemize}
\item[i)] The set of branch points,  $\Omega_{f} =\{c\in {\mathbb R}^{2}\;|\; \deg_{c}f\geq 2\}$  consists of $q-1$ points counted with multiplicity.
\item[ii)] Let ${\mathcal V} =\{ v_{1}, \cdots, v_{m}\} =f(\Omega_{f})\subset {\mathbb R}^{2}\setminus \{0\}$ be the set
           of distinct images of the branch points. For $i=1, \ldots, m$, let $L_{i}$  be a smooth topological ray in ${\mathbb R}^{2}\setminus \{0\}$ starting at  $v_{i}$  and extending to $\infty$ such that the collection of rays
            $\{L_{1}, \cdots, L_{m}\}$ are pairwise disjoint. Then
        \begin{enumerate}
              \item $f^{-1}(L_{i})$ consists of infinitely many  rays starting at points in the preimage set $f^{-1}(v_{i})$. If $x\in f^{-1}(v_{i})\cap \Omega_{f}$, there are $d_{x}=\deg_{x} f$ rays meeting at $x$ called {\em critical rays}.     If $x\in f^{-1}(v_{i})\setminus \Omega_{f}$, there is only one ray emanating from  $x$; it is called a {\em non-critical ray}.
              Set
              $$
              W=\mathbb R^2 \setminus  ( \cup_{i=1}^m L_i \cup \{0\}).
              $$
              \item The set of critical rays meeting at points in $\Omega_{f}$ divides $f^{-1}(W)$ into $q=1+\sum_{c\in \Omega_{f}} (d_{c}-1)$ open unbounded   connected components $W_{1}, \cdots, W_{q}$.
              \item[(3)] $f: W_{i}\to W $ is a universal covering for each $1\leq i\leq q$.
        \end{enumerate}
\end{itemize} \end{definition}

Note that the map restricted to each $W_i$ is a topological  model for the exponential map $z\mapsto e^{z}$ and the local degree at the critical points determines the number of $W_i$ attached at the point.

 We now define the space    ${\TE}_{p,q}$ in full generality where we assume $p>0$ and there is additional  behavior modeled on the role of  the new  critical points of $P e^{Q}$ introduced by  the non-constant polynomial $P$.

\medskip
\begin{definition}~\label{topexpdef}
If  $q\geq 1$ and $p\geq 1$, the space ${\TE}_{p,q}$ consists of topological branched covering maps $f: {\mathbb R}^{2}\to {\mathbb R}^{2}$ satisfying the following conditions:
\begin{itemize}
\item[i)] $f^{-1}(0)$ consists of $p$ points counted with multiplicity.
\item[ii)] The set of branch points, $\Omega_{f} =\{c\in {\mathbb R}^{2}\;|\; \deg_{c}f\geq 2\}$ consists of $p+q-1$  points counted with multiplicity.
\item[iii)] Let $\Omega_{f, 0} = \Omega_{f} \cap f^{-1}(0)$  be the $k<p$ branch points that map to $0$ and $\Omega_{f,1} =\Omega_{f}\setminus \Omega_{f,0}$  the $p+q-1-k$ branch points  that do not.   Note that $\Omega_{f,1}$ contains at least $q$ points and
  ${\mathcal V} =\{ v_{1}, \cdots, v_{m}\} =f(\Omega_{f,1})$  is contained in ${\mathbb R}^{2}\setminus \{0\}$. For $i=1, \ldots, m$, let $L_{i}$  be a smooth topological ray in ${\mathbb R}^{2}\setminus \{0\}$ starting at $v_{i}$ and extending to $\infty$ such that  the collection of rays  $\{L_{1}, \cdots, L_{m}\}$ are pairwise disjoint. Then
          \begin{enumerate}
                \item $f^{-1}(L_{i})$ consists of infinitely many  rays starting at points in the pre-image set $f^{-1}(v_{i})$.   If $x\in f^{-1}(v_{i}) \setminus \Omega_{f,1}$, there is only one ray emanating from $x$; this is a {\em non-critical ray}.  If $x\in f^{-1}(v_{i})\cap \Omega_{f, 1}$, there are $d_{x}=\deg_{x} f$  {\em critical rays} meeting at $x$.   Set $$W=\mathbb R^2 \setminus  ( \cup_{i=1}^m(L_i) \cup \{0\}).$$
                \item The collection of all critical rays meeting at points in $\Omega_{f,1}$ divides $f^{-1}(W)$ into $l=p+q-k=1+\sum_{c\in \Omega_{f,1}} (d_{c}-1)$  open unbounded connected components.
                \item Set $f^{-1}(0) =\{ a_{i}\}_{i=1}^{p-k}$  where the $a_i$ are distinct.   Each $a_i$ is contained in a distinct component of $f^{-1}(W)$;  label these components $W_{i,0}$, $i=1, \ldots p-k$.  Then the restriction   $f: W_{i,0} \setminus \{a_i\} \rightarrow W$ is an unbranched covering map  of degree $d_i=deg_{a_i}f$ where $d_i>1$ if $a_i \in \Omega_{f,0}$ and $d_i=1$ otherwise.
                  \item Label the remaining $q$ connected
                  components of $f^{-1}(W)$ by $W_{j,1}$, $j=1, \ldots, q$.
                  Then the restriction $f: {W_{j,1}} \rightarrow W$
                  is a universal covering map.
          \end{enumerate}
\end{itemize}
\end{definition}
 
From~\cite[Section 3]{Z}, we know that the $(p,q)$-exponential maps are topological exponential maps  of type $(p,q)$ as we defined. The converse is also true.

\vspace*{5pt}
\begin{theorem}~\label{topexp}
Suppose $f\in {\TE}_{p,q}$ is analytic. Then $f=Pe^{Q}$ for two polynomials $P$ and $Q$ of degrees $p$ and $q$. That is, an analytic topological exponential map of type $(p,q)$ is a $(p,q)$-exponential map.
\end{theorem}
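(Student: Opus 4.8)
The plan is to strip $f$ down to the shape $Pe^{h}$ by elementary complex analysis, prove that the exponent $h$ is a polynomial by a growth argument driven by the finite tract structure, and then read off the degree of $h$ from a critical‑point count. Throughout I assume $q\geq 1$, since the case $q=0$ is the classical statement that an entire proper self‑map of the plane with $f^{-1}(\infty)=\{\infty\}$ is a polynomial of degree $p$.

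First I would reduce to the factored form. Since $f$ is analytic on the plane and, by condition (i), $f^{-1}(0)$ consists of exactly $p$ points counted with multiplicity, $f$ is an entire function with finitely many zeros. Let $P$ be the polynomial of degree $p$ whose zeros, with multiplicities, are exactly those of $f$ (take $P\equiv 1$ in the case $p=0$, where $f$ omits $0$). Then $g:=f/P$ is entire and zero free, so $g=e^{h}$ for some entire function $h$, and $f=Pe^{h}$. It now suffices to show that $h$ is a polynomial with $\deg h=q$.

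The crux is to prove that $f$ has \emph{finite order of growth}. Granting this, $g=e^{h}$ has finite order, and a zero‑free entire function of finite order is the exponential of a polynomial by the Hadamard factorization theorem; hence $h$ is a polynomial. To establish finite order I would exploit the finiteness of the tract structure forced by membership in $\TE_{p,q}$: the only singular values of $f$ are the finitely many critical values $\mathcal V$ and the single asymptotic value $0$, and over $0$ there are exactly $q$ logarithmic tracts, namely the components $W_{j,1}$ on which $f:W_{j,1}\to W$ is a universal covering. Because $W$ deformation retracts onto a loop around $0$, the inclusion $W\hookrightarrow \CC\setminus\{0\}$ is a $\pi_1$‑isomorphism, so $\exp^{-1}(W)$ is simply connected and $\exp:\exp^{-1}(W)\to W$ is \emph{the} universal covering of $W$. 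Uniqueness of universal covers then yields a conformal isomorphism $\Phi_{j}\colon W_{j,1}\to \exp^{-1}(W)$ with $f=\exp\circ\Phi_{j}$ on $W_{j,1}$; after this logarithmic change of coordinate each tract is a half‑plane on which $\Re h\to -\infty$. The $q$ complementary sectors are tracts over $\infty$, where $\Re h\to +\infty$. Thus a neighborhood of infinity is partitioned into $2q$ alternating tracts — exactly the Phragmén–Lindelöf picture of a degree‑$q$ exponent. Running a Phragmén–Lindelöf estimate in each of these $2q$ sectors bounds $\Re h=\log|g|$ by a fixed power of $|z|$, which gives $g$ finite order (in fact order $\leq q$); alternatively one may cite the Nevanlinna–Elfving theory of functions of finite type, whose line complex here is a finite graph that forces finite order. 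I expect this conversion of the purely topological finiteness of the tract count into an analytic finite‑order estimate to be the main obstacle.

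Finally, once $h$ is known to be a polynomial, its degree is pinned down by counting branch points. We have $f'=(P'+Ph')e^{h}$, and for $\deg h\geq 1$ the term $Ph'$ has degree $p+\deg h-1$, strictly larger than $\deg P'=p-1$, so $P'+Ph'$ is a polynomial with exactly $p+\deg h-1$ zeros counted with multiplicity. By condition (ii) the branch set $\Omega_{f}$ has exactly $p+q-1$ points counted with multiplicity, and $h$ is nonconstant (otherwise $f$ would be a degree‑$p$ polynomial with only $p-1<p+q-1$ branch points), whence $\deg h=q$. Therefore $f=Pe^{Q}$ with $\deg P=p$ and $\deg Q=q$, which is the assertion of Theorem~\ref{topexp}.
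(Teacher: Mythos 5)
Your reduction to $f=Pe^{h}$ and your closing degree count are essentially the paper's own argument: the paper writes $f=Pe^{g}$ via the standard factorization of an entire function with finitely many zeros, differentiates to get $f'=(Pg'+P')e^{g}$, uses condition (ii) to say $Pg'+P'$ has exactly $p+q-1$ zeros, and concludes $\deg g'=q-1$. Where you genuinely depart from the paper is in noticing that ``entire with finitely many zeros'' only yields $Pg'+P'=Re^{s}$ for a polynomial $R$ and an entire $s$, so that a separate argument is needed to exclude a transcendental exponent; the paper passes over this silently by declaring $Pg'+P'$ to be a polynomial, whereas you insert a finite-order-plus-Hadamard step to justify it. That is the right instinct, and you have correctly located the real difficulty.

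The problem is that the step you flag as the main obstacle is not closed, and the Phragm\'en--Lindel\"of route you sketch does not close it as stated. First, Definition~\ref{topexpdef} says nothing about the behaviour of $f$ over $\infty$: the components $W_{i,0}$ are also unbounded, and your claim that the $q$ complementary regions are tracts over $\infty$ on which $\Re h\to+\infty$ is not among the hypotheses and would itself need proof. Second, and more seriously, finiteness of the number of tracts does not by itself bound the order from above; the Denjoy--Carleman--Ahlfors theorem controls the number of direct singularities \emph{by} the order, not conversely, and a Phragm\'en--Lindel\"of estimate requires either an a priori growth restriction or the knowledge that each tract sits inside a genuine angular sector --- a priori a tract defined only topologically may spiral, and then no polynomial bound on $\Re h$ follows. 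What actually forces the conclusion is the much stronger hypothesis that $f^{-1}(W)$ has exactly $p-k+q$ components, on each of which $f$ is a covering of the fixed domain $W$; that is, the finiteness of the entire line complex, which is precisely the Nevanlinna--Elfving theory you mention only as an ``alternatively.'' That citation (or an explicit Ahlfors-distortion argument exploiting the covering structure rather than the bare tract count) must be the argument, not a fallback; until it is supplied, the key claim that $h$ is a polynomial remains unproved. It is only fair to add that the paper's two-line proof elides exactly the same point.
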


\begin{proof}
If $q=0$, then $f$ is a polynomial $P$ of degree $p$.

If $q\geq 1$, then $f$ is an entire function with $p$ roots, counted with multiplicity.
Every  such function can be expressed as
$$
f (z)= P(z) e^{g(z)}
$$
where $P$ is a polynomial of degree $p$ and $g$ is some entire function (see~\cite[Section 2.3]{Al}).

Consider  $$f'(z) = (P(z)g'(z)+P'(z))e^{g(z)}.$$
It is also an entire function,  and by assumption it has $p+q-1$ roots so that $Pg'+P'$ is a polynomial of degree $p+q-1$.  It follows that $g'$ is a polynomial of degree $q-1$ and $g=Q$ is a polynomial of degree $q$.
\end{proof}

Note that if $f \in  {\TE}_{p,q}$, $ q \neq 0$,  the origin plays a special role:  it is the only point with no or finitely many pre-images.  The conjugate of $f$ by $z \mapsto az$,  $ a \in {\mathbb C}^*$, is also in  ${\mathcal TE}_{p,q}$;  conjugate maps are conformally equivalent.

For $f\in {\TE}_{p,q}$, we define the {\em post-singular set} as follows:
\begin{itemize}
\item[i)] When $q=0$,  $E$ is a polynomial and, as mentioned in the introduction,  is treated elsewhere.   We therefore always assume $q \geq 1$.
\item[ii)] When $q=1$ and $p=0$, the {\em post-singular set} is
$$
P_{f} =\overline{\cup_{n\geq 0} f^{n}(0)}\cup\{\infty\}.
$$
We normalize so that $f(0)=1\in P_{f}$.
\item[iii)] When $q\geq 1$ and $p\geq 1$, the set of branch points is
$$\Omega_{f} =\{ c\in {\mathbb R}^{2}\;|\; \deg_{c} f \geq 2\}$$ and the {\em post-singular set} is
$$
P_{f} = \overline{\cup_{n\geq 0} f^{n}({\mathcal V}\cup \{ 0 \})} \cup \{\infty\}.
$$
If $q>1$ or if $q=1$ and $f(0) \neq 0$, we normalize so that $f(0)=1\in P_{f}$.
If $f(0)=0$, then, by the assumption $q \geq 1$, there is  a branch point $c_{k+1}\not=0$ such that $v_{1}=f(c_{k+1})\not= 0$.
We normalize so that $v_{1}=1$.
\end{itemize}
To avoid trivial cases we assume that $\#(P_{f}) \geq 4$.

It is clear that, in any case, $P_{f}$ is forward invariant, that is,
$$
f(P_{f}\setminus \{\infty\})\cup \{\infty\} \subseteq P_{f}
$$
or equivalently,
$$
f^{-1} (P_{f} \setminus \{\infty\})\cup \{\infty\} \supset P_{f}.
$$
Note that since we assume $q\geq 1$, $f^{-1}(P_{f}\setminus \{\infty\}) \setminus (P_{f}\setminus \{\infty\})$
contains infinitely many points.

\begin{definition}
We call $f\in {\TE}_{p,q}$  {\em post-singularly finite} if $\#(P_{f})<\infty$.
\end{definition}

\section{The Space $\M_2$} \label{sec:merom}
In this section we define the space of meromorphic functions $\M_2$.
It is the model for the  more general space of topological functions $\T2$ that we define in the next section.

The space $\M_2$ consists of meromorphic functions whose only singular values are its omitted values.
More precisely,

\begin{definition}
The space $\M_2$ consists of meromorphic functions $g:\C \rightarrow \hat\C$
with exactly two asymptotic values and no critical values.
\end{definition}

\subsection{Examples}

Examples of functions in $\M_2$ are the exponential functions $\alpha e^{\beta z}$
and the tangent functions $\alpha \tan {i\beta z}=i\alpha \tanh{\beta z}$
where $\alpha,\beta$ are complex constants.

The asymptotic values for  the exponential functions
above are $\{0,\infty\}$;  the half plane $\Re{ \beta z} < 0$ is an asymptotic tract for $0$
and the half plane $\Re{ \beta z} >0$ is an asymptotic tract for infinity.
The asymptotic values for the tangent functions above are $\{\alpha i, -\alpha i\}$
and the asymptotic tract for $\alpha i$ is the half plane $\Im{ \beta z} >0$ while
the asymptotic tract for $-\alpha i$ is the half plane $\Im{ \beta z } < 0$.

\subsection{Nevanlinna's Theorem}

To find the form of the most general function in $\M_2$ we use a theorem of Nevanlinna~\cite[Chapter 10]{Nev}.   
\begin{theorem} [Nevanlinna]
Every meromorphic function $g$ with exactly $p$ asymptotic values and no critical values
has the property that its Schwarzian derivative is a polynomial of degree $p-2$.  That is
\begin{equation} \label{SCH} S(g)= \big(\frac{g''}{g'}\big)' - \frac{1}{2}\big(\frac{g''}{g'}\big)^2 = a_{p-2}z^{p-2} + \ldots a_1 z + a_0. \end{equation}
Conversely, for every polynomial $P(z)$ of degree $p-2$,  the solution to
the Schwarzian differential equation $S(g)=P(z)$ is a meromorphic function with $p$ asymptotic values and no critical values.
\end{theorem}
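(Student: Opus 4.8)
The plan is to convert the statement about $g$ into a statement about a second-order linear differential equation and then read the asymptotic values off the behavior of its solutions at infinity. The two facts that make the Schwarzian the right invariant are that $S(g)$ is unchanged under post-composition by a M\"obius transformation, and that two functions with the same Schwarzian differ precisely by such a post-composition. Consequently, if I set $\psi := S(g)$, then $g = w_1/w_2$ for some basis $\{w_1,w_2\}$ of solutions of
\[ w'' + \tfrac{1}{2}\,\psi\,w = 0, \]
and conversely any such ratio has Schwarzian $\psi$. This last point follows from the identity $S(y_1/y_2) = 2q - \tfrac12 p^2 - p'$ for $y'' + p\,y' + q\,y = 0$, applied with $p = 0$ and $q = \tfrac12\psi$.

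Next I would show that $\psi$ is entire. Since $g$ has no critical values it has no critical points, so $g'$ never vanishes and $g$ is everywhere a local homeomorphism to $\hat{\mathbb C}$; the Schwarzian is holomorphic wherever $g$ is locally injective, in particular at the (necessarily simple) poles of $g$, so $\psi$ is holomorphic on all of $\mathbb C$. The same fact is visible from $g = w_1/w_2$: because the equation has no first-order term, its Wronskian $W = w_1'w_2 - w_1 w_2'$ is a nonzero constant, and $g' = -W/w_2^2$ never vanishes. This is the dictionary between ``no critical values'' and ``$\psi$ entire,'' and it handles the absence of critical values in both directions at once.

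The crux is the asymptotic count, and this is the step I expect to be hardest. For the converse, take $\psi = P$ a polynomial of degree $p-2$. At infinity the equation has an irregular singular point, and WKB (asymptotic) integration produces solutions behaving like $\psi^{-1/4}\exp\!\big(\pm\!\int\!\sqrt{-\psi/2}\big)$, whose exponent grows like $z^{p/2}$. The anti-Stokes rays $\Re\!\int\!\sqrt{-\psi/2} = 0$ are $p$ in number and cut a neighborhood of infinity into $p$ Stokes sectors; in each sector a recessive solution is determined up to scale, and along interior directions the ratio $g = w_1/w_2$ tends to a single limit. These limits are exactly the $p$ asymptotic values, there are no others since the only ends of the covering are the sectors, and $g' = -W/w_2^2 \neq 0$ shows there are no critical values.

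For the forward direction I would run the same analysis in reverse. We already know $\psi$ is entire, and the Stokes picture identifies the asymptotic tracts of $g$ with the Stokes sectors at infinity, whose number is governed by the growth of $\int\sqrt{\psi}$: a polynomial of degree $n$ yields exactly $n+2$ sectors, while a transcendental entire $\psi$ makes $\int\sqrt{\psi}$ grow faster than any power, forcing infinitely many anti-Stokes rays and hence infinitely many asymptotic tracts. Since $g$ has only $p<\infty$ asymptotic values, $\psi$ must be a polynomial, and matching $n+2 = p$ gives $\deg\psi = p-2$. The genuine difficulty lies in making this bookkeeping rigorous: controlling the error terms in the asymptotic integration uniformly up to the sector boundaries, correctly matching sectors to asymptotic values, and rigorously excluding the transcendental case. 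The cleanest way to close that last point is to invoke the Denjoy--Carleman--Ahlfors theorem, so that finitely many logarithmic singularities force $g$ to have finite order, which in turn forces $\psi = S(g)$ to be a polynomial of the required degree.
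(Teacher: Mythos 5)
The paper does not actually prove this statement: it is quoted as a classical result with a citation to Nevanlinna's book, and only the special case $S(g)=\mathrm{const}$ is worked out later via the linear equation $w''+\tfrac12 S(g)w=0$. So there is no ``paper proof'' to match; what follows is an assessment of your sketch on its own terms.

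Your reduction to the linear equation and your dictionary between ``no critical values'' and ``$S(g)$ entire, Wronskian a nonzero constant, $g'=-W/w_2^2\neq 0$'' are correct, and your treatment of the converse is essentially the standard one (Liouville--Green asymptotics for $w''+\tfrac12 P w=0$, one subdominant solution per Stokes sector, $p$ sectors for $\deg P=p-2$, adjacent subdominant solutions linearly independent, hence a limit of $w_1/w_2$ along each sector). Modulo the uniform error estimates you flag, that half is sound.

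The genuine gap is in the forward direction, at exactly the point you try to close with the Denjoy--Carleman--Ahlfors theorem. That theorem runs the other way: it bounds the number of direct (in particular logarithmic) singularities of $g^{-1}$ by $\max(1,2\rho)$ where $\rho$ is the order of $g$; it does \emph{not} assert that finitely many logarithmic singularities force finite order. So you cannot conclude finite order of $g$, and hence cannot conclude (via the lemma on the logarithmic derivative) that the entire function $S(g)$ has $T(r)=O(\log r)$ and is therefore a polynomial. Likewise, the claim that a transcendental entire $\psi$ ``forces infinitely many anti-Stokes rays'' is not available: for a general entire $\psi$ the WKB picture at infinity (finitely many turning points, well-defined sectors) simply does not apply, so you cannot count tracts this way. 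The classical route here is different in kind: one works with the simply connected surface spread over the sphere associated to $g^{-1}$, which by hypothesis has finitely many logarithmic branch points and no algebraic ones; its line complex determines it combinatorially, one builds a comparison function $G$ with polynomial Schwarzian realizing the same surface (this is your converse direction), and then $g=G\circ\phi$ for a conformal bijection $\phi$ of the plane, which must be affine, whence $S(g)=S(G\circ\phi)=(S(G)\circ\phi)\cdot(\phi')^2$ is again a polynomial of degree $p-2$. Without some argument of this type (or another way to rule out infinite order), the forward implication remains unproved.
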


Here we apply this theorem to the special case when $P(z)$ is a constant.  
It is easy to check that $S(\alpha e^{\beta z} )= -\frac{1}{2}\beta^2$ and $S(\alpha \tan{\frac{i\beta}{2}}{z})=-\frac{1}{2}\beta^2$.

To find all functions in $\M_2$, let $\beta \in \CC$ be constant and consider the Schwarzian differential equation
\begin{equation}\label{eqn:Schwarzian}
 S(g)=-\beta^2/2 \end{equation}
and the related  second order linear differential equation
 \begin{equation}\label{eqn:Ricci}
 w''+\frac{1}{2}S(g)w= w'' -\frac{\beta^2}{4}  w = 0. \end{equation}
It is straightforward to check that if  $w_1, w_2$ are linearly independent solutions to  equation~(\ref{eqn:Ricci}),  then $g_{\beta}
=w_2/w_1$ is a solution to equation~(\ref{eqn:Schwarzian}). The converse is also true: if $g$ is a meromorphic function
defined in a simply connected domain in $\CC$, then two solutions $w_{1}$ and $w_{2}$ of (\ref{eqn:Ricci}) 
can be found, and furthermore, these are unique up to a common scale factor. Thus solutions to the differential equation (\ref{eqn:Ricci}) give all the solutions of the equation (\ref{eqn:Schwarzian}).  (This classical result can be found, for example, in~\cite[sec. 17.6]{Hi}). 
 
Normalizing so that $w_1(0)=1,w_1'(0)=-\beta/2, w_2(0)=1, w_2'(0)=\beta/2$ and
solving equation~(\ref{eqn:Ricci}), we have $w_1=e^{- \frac{\beta}{2} z}, w_2=e^{\frac{\beta}{2}{z}}$ as linearly independent solutions and $g_{\beta}(z)=e^{\beta z}$ and $g_{-\beta}(z)=e^{-\beta z}$ as linearly independent  solutions to  equation~(\ref{eqn:Schwarzian}).
An arbitrary solution to equation~(\ref{eqn:Schwarzian})
 then has the form \begin{equation}\label{eqn:gensoln}
 \frac{Aw_2 +Bw_1}{Cw_2+Dw_1},  \, \, A,B,C,D \in \hat{\C}, \, AD-BC= 1  \end{equation}
and its  asymptotic values are $\{A/C,B/D\}$.

\begin{remark}
The asymptotic values are distinct and omitted.
\end{remark}

\begin{remark}
If $B=C=0, AD=1, A=\sqrt{\alpha}$ we obtain the exponential family $\{\alpha e^{\beta z}\}$  with asymptotic values at $0$ and $\infty$.  If $A=-B=\sqrt{\frac{\alpha i}{2}}, \ C=D=\sqrt{-\frac{i}{2\alpha}}$ we obtain the tangent family  $\{\alpha \tan {\frac{i\beta}{2} z}\}$ whose asymptotic values $\{ \pm \alpha i\}$ are symmetric with respect to the origin.
\end{remark}

\begin{remark}\label{rmk3}
Note that in the solutions of $S(g)= - \beta^2/2$  what appears are 
$e^{\beta}$ and $e^{-\beta}$, and not $\beta$ (or $\beta^{2}$);  this creates an ambiguity about which branch 
of the logarithm of $e^{\beta}$ corresponds to a given solution of equation~(\ref{eqn:Schwarzian}).
 In section~\ref{sec:proofmt} we address this ambiguity in our situation.   We show that the topological map  we start with determines a  topological constraint which in turn, defines  the appropriate branch of the logarithm for each of the iterates in our iteration scheme.
\end{remark}

\begin{remark}
One of the basic features of the Schwarzian derivative is that it satisfies the following cocycle relation:
if $f,g$ are meromorphic functions then
$$
S(g \circ f)(z) = S(g(f)) (z) f'(z)^2 + S(f(z)).
$$
In particular, if $T$ is a M\"obius transformation,
$S(T(z))=0$ and $S(T\circ g(z) )= S((g(z))$ so that post-composing by $T$ doesn't change the Schwarzian.
\end{remark}

In our dynamical problems  the point at infinity plays a special role and the dynamics
are invariant under conjugation by an affine map.
Thus, we may assume that all the solutions have one asymptotic
value at $0$ and that they take the value $1$ at $0$.

Since this is true for $g_{\beta}(z)=e^{\beta z}$, any solution with this normalization has the form \footnote{Notice that $g_{\alpha,\beta}$ is obtained from $g_{\beta}$ by a M\"obius transformation with determinant $1$.} 
\begin{equation}\label{eqn:gennormal}
g_{\alpha,\beta}(z)= \frac{\alpha g_{\beta}(z)}{(\alpha-\frac{1}{\alpha})g_{\beta}(z) + \frac{1}{\alpha}}
\end{equation}
where $\alpha$ is an arbitrary value in $\CC^*$. The second asymptotic value is
$\lambda=\frac{\alpha}{\alpha-\frac{1}{\alpha}}$. It takes values in $\CC \setminus \{0,1\}$.
The point at infinity is an essential singularity for all these functions.

\medskip
The parameter space $\P$ for these functions is the two complex dimensional space
$$
\P=\{\alpha, \beta \in  \CC^*  \}.
$$
The parameters define a natural complex structure for the space  $\M_2$.
The subspace of {\em entire}  functions in $\M_2$ is the one dimensional subspace
of $\P$ defined by fixing $\alpha=1$ and varying $\beta$;
$$
g_{\beta}(z)=e^{\beta z}.
$$
The tangent family has symmetric asymptotic values.
Renormalized, it forms  another one dimensional subspace of $\P$.
This is defined by fixing $\alpha = \sqrt{2}$ and varying $\beta$;
$$
g_{\frac{1}{\sqrt{2}},\beta}(z)=1+\tanh{\frac{\beta}{2} z}= \frac{\sqrt{2} e^{\beta z}}{\frac{1}{\sqrt{2}}e^{\beta z}+\frac{1}{\sqrt{2}}}.
$$
These functions have asymptotic values at $\{0,2\}$ and $g_{\frac{1}{\sqrt{2}},\beta}(0)=1.$

\begin{definition}
For $g_{\alpha,\beta}(z) \in \M_2$, the set  $\Omega=\{0,\lambda\}$ of asymptotic values is the set of singular values.
The {\em post-singular set} $P_{g}$ is defined by
$$
P_{g} = \overline{ \bigcup_{n \geq 0} g^{n}(\Omega) } \cup \{\infty\}.
$$
\end{definition}

Note that we include the point at infinity separately in $P_g$ because
whether or not it is an asymptotic value, it is an essential singularity
and its forward orbit is not defined.
The asymptotic values are in $P_g$ and,
since $0$ and $\lambda$ are omitted and $g_{\alpha,\beta}(0) =1 \in P_g$, $\#P_g \geq 3$.

\section{The Space $\T2$ }
We now want to consider the topological structure of functions in $\M_2$
and define $\T2$ to be the set of maps with the same topology.

\begin{definition}
Let $X$ be a simply connected open surface and let $S^2={\mathbb R}^{2}\cup \{\infty\}
 $ be the 2-sphere.
Let $f_{a,b}:X \rightarrow S^2 \setminus \{a,b\}$ be an unbranched covering map;
that is, a universal covering map.
We say the pair $(X,f_{a,b})$ is equivalent
to the pair $(Y, f_{c,d})$ if and only if there is a homeomorphism $h:X \rightarrow Y$
such that $f_{c,d} \circ h= f_{a,b}$.
An equivalence class of such classes is called a {\em 2-asymptotic value map }
and the space of these pairs is denoted by $\T2$.
\end{definition}

Let $(X,f_{a,b})$ be a representative of a map in $\T2$.
By abuse of notation, we will  often suppress the dependence
on the equivalence class and identify $X$ with ${\mathbb R}^{2}=S^2 \setminus \{\infty\}$
and refer to $f_{a,b}$ as an element of $\T2$.

By definition $f_{a,b}$ is a local homeomorphism and satisfies the following conditions:

For $v=a$ or $v=b$,  let $U_v \subset X$ be a neighborhood of $v$ whose boundary is a simple closed curve
that separates $a$ from $b$ and contains $v$ in its interior.

\begin{enumerate}
\item $f_{a,b}^{-1}(U_v \setminus \{v\})$ is connected and simply connected.
\item The restriction $f_{a,b}: f_{a,b}^{-1}(U_v \setminus \{v\}) \rightarrow U_v \setminus \{v\}$ is a regular covering of a punctured topological disk whose degree is  infinite.
\item   $f^{-1}( \partial U_v)$ is an open curve extending to infinity in both directions.
\end{enumerate}
In analogy with meromorphic functions with isolated singularities we say

\begin{definition}
$v$ is called a {\em logarithmic singularity} of $f_{a,b}^{-1}$ or, equivalently, an    {\em asymptotic value} of $f_{a,b}$.
The domain $V_v=f_{a,b}^{-1}(U_v \setminus \{v\})$ is called an {\em asymptotic tract for $v$}.
\end{definition}

\begin{definition}
{\em  $\Omega_f=\{a,b\}$ is the set of singular values of $f_{a,b}$}.
\end{definition}

Endow $S^2$ with the standard complex structure so that it is identified with $\hat\CC$.
By the classical uniformization theorem, for any pair $(X,f_{a,b})$,
there is a  map $\pi: \CC \rightarrow  X$  such that $g_{a,b} = f_{a,b} \circ \pi $ is meromorphic.
It is called {\em the meromorphic function associated to $f_{a,b}$}.

By Nevanlinna's  theorem $S(g(z))$ is constant and moreover,

\begin{proposition}\label{prop2}
If  $g(z) \in \M_2 $ with $\Omega_g=\{a,b\}$ then $g(z)=g_{a,b}(z)  \in  \T2$ and, conversely,
if $g_{a,b} \in \T2$ is meromorphic then $g_{a,b} \in \M_2$.
\end{proposition}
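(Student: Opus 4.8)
The plan is to prove the two implications separately, in each case exploiting the basic principle that a meromorphic function restricts to a covering map over the complement of its singular values, together with the fact that $\hat\CC\setminus\{a,b\}$ is covered by $\CC$.

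\emph{Forward direction.} Suppose $g\in\M_2$ with $\Omega_g=\{a,b\}$. By definition $g$ has no critical values, and its only asymptotic values are $a$ and $b$; by the Remark following~(\ref{eqn:gensoln}) these are distinct and omitted. Hence the full set of singular values of $g$ is $\{a,b\}$ and $g^{-1}(\{a,b\})=\emptyset$, so $g$ maps $\CC$ into $\hat\CC\setminus\{a,b\}$. I would then invoke the structure theory of singularities recalled in Section~\ref{sec:epq}: over a value that is neither a critical value nor an asymptotic value, the map is an even covering. Concretely, for $w_0\neq a,b$ I choose a round disk $D\ni w_0$ omitting $a$ and $b$; the absence of critical points lets every path in $D$ lift, and the absence of a finite asymptotic value over $D$ prevents a lift from escaping to $\infty$, so $D$ is evenly covered. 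Thus $g\colon\CC\to\hat\CC\setminus\{a,b\}$ is a covering map, and since $\CC$ is simply connected it is a \emph{universal} covering map. Taking $X=\CC$ and $f_{a,b}=g$ then exhibits $g$ as an element of $\T2$; the three conditions in the definition of $\T2$ (the connectivity and simple connectivity of $g^{-1}(U_v\setminus\{v\})$, the infinite-degree regular covering onto the punctured neighborhood, and the single boundary curve running to infinity in both directions) are exactly the assertion that $a$ and $b$ are isolated logarithmic singularities of $g^{-1}$, which follows from the description of logarithmic singularities in Section~\ref{sec:epq}.

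\emph{Converse.} Suppose $g_{a,b}\in\T2$ is meromorphic. By the definition of $\T2$ the representative $g_{a,b}\colon\CC\to\hat\CC\setminus\{a,b\}$ is a universal covering map, in particular a local homeomorphism; since it is holomorphic, its derivative is nowhere zero, so it has no critical points and hence no critical values. For any $w\neq a,b$ the covering property gives an evenly covered neighborhood, and a short argument with path components shows that no path tending to $\infty$ in $\CC$ can have image tending to such a $w$; therefore $w$ is neither a critical nor an asymptotic value, and the only singular values are $a$ and $b$, which are asymptotic values by the $\T2$ structure. Hence $g_{a,b}$ is meromorphic with exactly two asymptotic values and no critical values, i.e. $g_{a,b}\in\M_2$.

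The step I expect to be the main obstacle is the rigorous version, in the forward direction, of the claim that a meromorphic function whose singular set is precisely $\{a,b\}$ is genuinely a covering of $\hat\CC\setminus\{a,b\}$, that is, ruling out any hidden asymptotic behaviour over a putative regular value. This is exactly the content of the structure theory of logarithmic and algebraic singularities recalled in Section~\ref{sec:epq}, and once this covering statement is secured both implications are short. The explicit normal forms $g_{\alpha,\beta}$ of~(\ref{eqn:gennormal}), whose asymptotic tracts are the preimages of the two half-plane ends, then serve only as a consistency check rather than as a logical step.
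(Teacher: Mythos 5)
Your proof is correct and follows essentially the same route as the paper, which simply asserts in two sentences that any $g\in\M_2$ is a universal cover of $\hat\CC\setminus\Omega_g$ and hence in $\T2$, and that a meromorphic $g_{a,b}\in\T2$ has only the omitted values as singular values and hence lies in $\M_2$. Your version merely fills in the standard covering-space details (even covering over regular values, nonvanishing derivative of a holomorphic local homeomorphism) that the paper leaves implicit.
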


\begin{proof}
Any $g(z) \in \M_2$ is a universal cover $g:\CC \rightarrow \hat\CC \setminus \Omega_g$
and so belongs to $\T2$.
Conversely, if $g_{a,b} \in \T2$, it is meromorphic and its only singular values are the omitted values $\{a,b\}$;
it is thus in $\M_2$.
\end{proof}

We define the post-singular set for maps in $\T2$ just as we did for functions in $\M_2$.

 \begin{definition}
 For $f=f_{a,b} \in \T2$, the {\em post-singular set} $P_{f}$ is defined by
 $$
 P_{f} = \overline{ \bigcup_{n \geq 0} f^{n}(\Omega_{f}) } \cup \{\infty\}
 $$
\end{definition}

Note that under the identification of $S^2$ with the Riemann sphere and $X$ with the complex plane,
$S^2 \setminus X$ is the point at infinity and it has no forward orbit although it may be an asymptotic value.
We therefore include it in $P_f$.

Conjugation  of $f_{a,b}$ by an affine transformation $T$ results in another map in $\T2$.
In what follows, therefore, we  always assume $X$ is the Euclidean plane ${\mathbb R}^{2}$,  one asymptotic value is $a=0$ and the second asymptotic value is  $b=\lambda$ and we normalize so that $f(0)=1$.

\medskip
\begin{definition}
We call $f\in \T2$  {\em post-singularly finite} if $\#(P_{f})<\infty$.
\end{definition}

\section{Combinatorial Equivalence}
\label{sec:combequiv}

\begin{definition}~\label{combeq}
Suppose $f$ and $g$ are a pair of  post-singularly  finite maps  either in ${\mathcal TE}_{p,q}$ or in  $\T2$. We say that $f$ and $g$ are {\em combinatorially equivalent}  if  there are two 
homeomorphisms $\phi$ and $\psi$ of the sphere $S^{2}={\mathbb R}^{2}\cup\{\infty\}$ fixing $0$ and $\infty$ such that $\phi\circ f=g\circ \psi$ on ${\mathbb R}^{2}$ and if, in addition,  $\phi^{-1}\circ \psi$ is isotopic to the identity of $S^{2}$ rel $P_{f}$;  that is, $\phi|P_{f}=\psi|P_{f}$. 
\end{definition}

The commutative diagram for the above definition is
\begin{equation*}
\xymatrix{\mathbb{R}^2 \ar[d]^f\ar[r]^{\psi} & \mathbb{R}^2\ar[d]^{g}\\
\mathbb{R}^2\ar[r]^\phi & \mathbb{R}^2}
\end{equation*}
The isotopy condition says that $P_{g}= \phi(P_{f})$.

Consider
${\mathbb R}^{2}\cup \{\infty\}$ equipped with the standard conformal structure as the Riemann sphere and let $f$ be a map from $X \subset {\mathbb C}$ into $\hat{\CC}$.  We say $f$ is {\em locally $K$-quasiconformal} for some $K>1$ if for any $z\in  \hat{\C} \setminus (\Omega_{f}\cup \{0,1\})$  there is a neighborhood $U$ of $z$ such that $f: U\to f(U)$ is $K$-quasiconformal.  Since the maps $f$ we are working with are isotopies rel a finite set, the following lemma is standard so we omit the proof. 

\begin{lemma}
Any post-singularly finite $f\in {\TE}_{p,q}$  (or $f\in \T2$) is combinatorially equivalent to some locally $K$-quasiconformal map $g\in {\TE}_{p,q}$ (or $g\in \T2$).
\end{lemma}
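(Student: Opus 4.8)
The plan is to produce the locally quasiconformal representative in two stages: first construct a quasiregular map $g$ in the same topological class that shares all of the combinatorial data of $f$, and then use covering-space theory to check that $f$ and $g$ are combinatorially equivalent. Throughout I keep the finite marked set $P_f$ fixed, and I only ask the two homeomorphisms $\phi,\psi$ of Definition~\ref{combeq} to agree with the identity on $P_f$ and to fix $0,\infty$; then the isotopy condition $\phi^{-1}\circ\psi\simeq\mathrm{id}$ rel $P_f$ is all that remains to be arranged.

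To build the model, I equip the target $S^2={\mathbb R}^2\cup\{\infty\}$ with its standard conformal structure and retain the data defining the class: the points of $P_f$, the singular values $\mathcal V$, the pairwise disjoint rays $L_i$, and the distinguished point $0$, which together cut $W={\mathbb R}^2\setminus(\cup_i L_i\cup\{0\})$ and its complement into finitely many cells. I then prescribe $g$ on the source by its local model on each of the preimage regions described in Definition~\ref{topexpdef} (respectively, in the axioms for $\T2$): near a branch point $c$ of local degree $d_c$ the conformal model $\zeta\mapsto\zeta^{d_c}$; on each finite-degree region $W_{i,0}\setminus\{a_i\}\to W$ the corresponding $d_i$-fold covering model $\zeta\mapsto\zeta^{d_i}$; on each universal-covering tract $W_{j,1}\to W$ — and, in the $\T2$ case, on all of ${\mathbb R}^2\to S^2\setminus\{0,\lambda\}$ — the exponential conformal model (the map $\zeta\mapsto e^{\zeta}$, respectively a solution of~(\ref{eqn:Schwarzian})); and on the remaining regions, where $f$ is an orientation-preserving local homeomorphism, an orientation-preserving diffeomorphism onto the corresponding cell of $W$ matching the conformal pieces along the common boundary arcs. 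Arranging the decomposition so that every marked point other than the branch points and $0$ lies in the interior of a generic cell, the glued map $g$ is conformal on the model cells and a diffeomorphism on the generic cells, so its Beltrami dilatation is bounded and $g$ is locally $K$-quasiconformal at every point of $\hat{\mathbb C}\setminus(\Omega_f\cup\{0,1\})$. By construction $g$ verifies all the covering axioms, hence $g\in{\TE}_{p,q}$ (respectively $g\in\T2$).

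For combinatorial equivalence I note that $f$ and $g$ now carry identical data over the target — the same marked set, singular values and rays, the same branch points with the same local degrees, and isomorphic covering descriptions over $W$. I take $\phi$ to be a homeomorphism of the sphere isotopic to the identity rel $P_f$ and fixing $0,\infty$ (the identity itself is admissible once $g$ is built with the same target data). Since $f$ and $g$ restrict to isomorphic coverings over $W$, the map $\phi|_W$ lifts to a homeomorphism $\psi$ between the source decompositions with $\phi\circ f=g\circ\psi$; the lift extends continuously across the branch points and the finitely many preimages of $0$, and, using the freedom to adjust the lift by a deck transformation together with an isotopy supported off $P_f$, it can be normalized so that $\psi$ fixes $0,\infty$ and $\psi|_{P_f}=\phi|_{P_f}$. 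This gives precisely the commuting square of Definition~\ref{combeq} with the required isotopy condition, so $f$ and $g$ are combinatorially equivalent.

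The only genuinely non-routine point, and the step I expect to be the main obstacle, is the infinite degree. On the asymptotic tracts $W_{j,1}$, and for $\T2$ on the whole plane, the covering over $W$ has infinitely many sheets, so one must secure a single uniform dilatation bound $K$ for $g$ and a lift $\psi$ that is controlled over all sheets at once. Both are handled by equivariance: the covering $f\colon W_{j,1}\to W$ (respectively $f\colon{\mathbb R}^2\to S^2\setminus\{0,\lambda\}$) has deck group ${\mathbb Z}$, and I choose both the diffeomorphism models on the generic cells and the lift $\psi$ to commute with the deck transformation. The dilatation of $g$ and the distortion of $\psi$ are then periodic over the infinitely many fundamental domains, hence bounded, which yields the uniform constant $K$. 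The finitely many branch points and the finite-degree pieces cause no such difficulty, so this equivariance is the single place where the transcendental setting goes beyond the classical Thurston smoothing argument.
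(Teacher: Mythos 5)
The paper offers no argument here --- the lemma is dismissed as ``standard'' and the proof omitted --- so there is nothing to compare line by line. Your overall strategy (build a locally quasiconformal model $g$ out of conformal local pictures near the branch points and on the covering tracts, glued to diffeomorphisms on generic cells, and then relate $f$ to $g$ by covering theory) is exactly the standard smoothing argument the authors have in mind, and your equivariance remark correctly isolates why infinite degree causes no new difficulty for the dilatation bound. The construction of $g$ itself is sound.

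The genuine gap is in the final step, where you produce $\psi$. First, for $\psi|_{P_f}=\phi|_{P_f}$ to be possible at all you must build $g$ to agree with $f$ on $P_f\setminus\{\infty\}$: if $\psi(x)=x$ and $\phi=\mathrm{id}$ then $g(x)=g(\psi(x))=\phi(f(x))=f(x)$. You place the marked points in the interiors of generic cells but never impose this condition. Second, the normalization freedom you invoke is one deck transformation per component of $f^{-1}(W)$, which lets you prescribe $\psi$ at a single point of each component; when a component contains several points of $P_f$, the abstract ``isomorphic coverings over $W$'' argument does not force $\psi$ to fix the remaining ones, and an isotopy ``supported off $P_f$'' cannot repair this --- it does not move $\psi$ on $P_f$, and post-composing $\psi$ destroys the identity $\phi\circ f=g\circ\psi$ unless $g$ is changed simultaneously. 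Third, Definition~\ref{combeq} requires $\phi^{-1}\circ\psi$ to be isotopic to the identity rel $P_f$, which is strictly stronger than agreement on $P_f$. All three defects are cured by running the comparison cell by cell rather than by a global lift: choose a cell decomposition of the target whose vertex set contains $P_f\cup{\mathcal V}\cup\{0\}$, pull it back by $f$ (so the source $1$-skeleton contains $f^{-1}(P_f\setminus\{\infty\})\cup\{\infty\}\supset P_f$), construct $g$ to agree with $f$ on that $1$-skeleton, and define $\psi=g^{-1}\circ f$ on each closed cell; then $\psi$ fixes the $1$-skeleton pointwise and Alexander's trick, applied equivariantly over the infinitely many cells in each tract, yields the required isotopy to the identity rel $P_f$. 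You should also note that $g$ is quasiconformal \emph{across} the gluing arcs (removability of analytic arcs for quasiconformal homeomorphisms) and that the diffeomorphisms chosen on the unbounded generic cells inside the asymptotic tracts have dilatation bounded near the ends, which again is most easily arranged equivariantly.
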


Thus without loss of generality, in the rest of the paper, we will assume that any post-singularly finite $f \in {\TE}_{p,q}$ (or $f\in \T2$) is locally $K$-quasiconformal for some $K\ge 1$.

\section{Teichm\"uller Space $T_{f}$}
\label{sec:teich sp}

Recall that we denote $\mathbb{R}^2 \cup \{\infty\}$ equipped with the standard conformal structure by $\widehat{\mathbb{C}}$.  Let ${\mathcal M}=\{ \mu \in L^{\infty} (\widehat{\mathbb C})\;|\; \|\mu\|_{\infty}<1\}$ be the unit ball in the space of all measurable functions on the Riemann sphere. Each element $\mu\in {\mathcal M}$ is called a Beltrami coefficient.
For each Beltrami coefficient $\mu$, the Beltrami equation
$$
w_{\overline{z}}=\mu w_{z}
$$
has a unique quasiconformal  solution $w^{\mu}$ which maps $\hat{\mathbb C}$  to itself fixing $0,1, \infty$.
Moreover, $w^{\mu}$ depends holomorphically  on $\mu$.

Let $f$ be a post-singularly finite map in ${\mathcal TE}_{p,q}$  ( or $ \T2$) with post-singular set $P_f$.    The Teichm\"uller space $T(\hat{\mathbb C}, P_f)$ is defined as follows.  Given Beltrami differentials   $\mu, \nu \in {\mathcal M}$  we say that      $\mu$ and $\nu$ are equivalent in $\mathcal M$,  and denote this by  $\mu\sim \nu$, if $(w^{\nu})^{-1} \circ w^{\mu}$ is isotopic to the identity map of $\widehat{\mathbb C}$ rel $P_f$. The equivalence  class of $\mu$ under $\sim$ is denoted by $[\mu]$.
We set
$$
T_f=T(\hat{\mathbb C}, P_f)= \mathcal M/ \sim.
$$

It is easy to see that $T_{f}$ is a finite-dimensional complex manifold and is equivalent to the classical Teichm\"uller space $Teich(\hat{\mathbb C}\setminus P_{f})$ of Riemann surfaces with  basepoint $\hat{\mathbb C}\setminus P_{f}$. Therefore, the Teichm\"uller distance $d_{T}$ and the Kobayashi distance $d_{K}$ on $T_{f}$ coincide (see e.g. ~\cite{GJW}).

\section{Induced Holomorphic Map $\sigma_{f}$}
\label{sec:Thurston map}

For any post-singularly finite $f$ in ${\mathcal TE}_{p,q}$ (or $T2$), there is an induced  map $\sigma= \sigma_{f}$ from $T_{f}$ into itself given by
$$
\sigma([\mu]) =[f^{*}\mu],
$$
where
\begin{equation}~\label{pullbackformula}
f^{*}\mu(z) = \frac{\mu_f(z) + \mu_f((f(z))
\theta(z)}{1 + \overline{\mu_f (z)} \mu_f(f(z)) \theta(z)}, \;\; \mu_{f} (z) =\frac{f_{\bar{z}}}{f_{z}},  \;\; \theta(z) =\frac{\bar{f}_{z}}{f_{z}}.
\end{equation}
It is a holomorphic map, so it contracts the the Kobayashi distance $d_{K}$.  By the final paragraph in the last section,  this means it is a contraction in the Teichm\"uller distance $d_{T}$. Thus we have that

\vspace*{5pt}
\begin{lemma}~\label{contractive}
For any two points $\tau$ and $\tilde\tau$ in $T_{f}$,
$$
d_{T}(\sigma(\tau), \sigma(\tilde\tau))\leq d_{T}(\tau, \tilde\tau).
$$
\end{lemma}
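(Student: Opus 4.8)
The plan is to deduce the statement entirely from two facts already in hand: the holomorphicity of $\sigma$, established at the beginning of this section, and the coincidence $d_{K}=d_{T}$ on $T_{f}$ recorded at the end of the previous section. Since these two inputs do all the work, the argument is short and purely formal, and it is insensitive to the infiniteness of the degree of $f$: all that matters is that $T_{f}$ is a finite-dimensional complex manifold and that $\sigma$ is a holomorphic map of it into itself. I would first reduce the desired $d_{T}$ inequality to the corresponding $d_{K}$ inequality, then establish the latter from the universal contraction property of the Kobayashi distance.

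First I would recall the fundamental contraction property of the Kobayashi distance: if $F\colon M\to N$ is a holomorphic map between complex manifolds, then
$$
d_{K}^{N}(F(x),F(y))\leq d_{K}^{M}(x,y)\quad\text{for all }x,y\in M.
$$
This follows directly from the definition of $d_{K}$ via chains of holomorphic disks together with the Schwarz--Pick lemma on the unit disk: post-composing an admissible chain joining $x$ to $y$ with the holomorphic map $F$ produces an admissible chain joining $F(x)$ to $F(y)$ of no greater length. Applying this to the holomorphic self-map $\sigma\colon T_{f}\to T_{f}$ and to the points $\tau,\tilde\tau$ yields
$$
d_{K}(\sigma(\tau),\sigma(\tilde\tau))\leq d_{K}(\tau,\tilde\tau).
$$

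It then remains only to replace $d_{K}$ by $d_{T}$ on both sides. Since $T_{f}$ is isomorphic to the classical Teichm\"uller space $Teich(\hat{\mathbb C}\setminus P_{f})$, Royden's theorem gives $d_{K}=d_{T}$ there, a fact already noted in the previous section; substituting into the last display produces the asserted inequality $d_{T}(\sigma(\tau),\sigma(\tilde\tau))\leq d_{T}(\tau,\tilde\tau)$.

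The only genuinely substantive ingredient is the holomorphicity of $\sigma$, and that is where I expect the real content to lie, although it is taken as granted at this point. To secure it one must verify that the pullback $[\mu]\mapsto[f^{*}\mu]$ given by formula~(\ref{pullbackformula}) is well defined on equivalence classes in $T_{f}$ and depends holomorphically on $[\mu]$, using the holomorphic dependence of the normalized solution $w^{\mu}$ on the Beltrami coefficient $\mu$ noted in the previous section. Once $\sigma$ is known to be a holomorphic self-map, the contraction estimate is automatic and requires no estimate specific to transcendental or infinite-degree maps; in particular, no analogue of the finite-degree key lemma of~\cite{DH} is needed here.
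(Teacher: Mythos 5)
Your argument is correct and is exactly the paper's: $\sigma$ is holomorphic, holomorphic maps do not increase the Kobayashi distance, and $d_{K}=d_{T}$ on $T_{f}$ by Royden's theorem as recorded at the end of the preceding section. The paper states this in one line; you have merely spelled out the Schwarz--Pick/chain-of-disks justification for the Kobayashi contraction property, which is a standard fact.
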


 The next lemma follows directly from the definitions.
\vspace*{5pt}
\begin{lemma}~\label{fixedpt}
A  post-singularly finite $f$ in ${\TE}_{p, q}$ (or $\T2$) is combinatorially equivalent to a $(p,q)$-exponential map $E=Pe^{Q}$ (or a meromorphic map in $\M2$) if and only if  $\sigma$ has a fixed point in $T_{f}$.
\end{lemma}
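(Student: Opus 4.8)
The plan is to unwind the definitions on both sides and show that a fixed point of $\sigma$ furnishes exactly the conformal structure that turns the pullback equation $\phi\circ f=g\circ\psi$ into a holomorphic realization, and conversely. First I would handle the forward direction. Suppose $\sigma([\mu])=[\mu]$, so that $[f^{*}\mu]=[\mu]$ in $T_{f}$. Unpacking the equivalence relation defining $T_{f}$, this means $(w^{\mu})^{-1}\circ w^{f^{*}\mu}$ is isotopic to the identity of $\widehat{\mathbb C}$ rel $P_{f}$. I would set $\psi=w^{f^{*}\mu}$ and $\phi=w^{\mu}$, both normalized to fix $0,1,\infty$, and then define
$$
g=\phi\circ f\circ\psi^{-1}.
$$
The key computation is that the Beltrami coefficient of $g$ is identically zero, so $g$ is holomorphic. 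This follows from the pullback formula \eqref{pullbackformula}: the quasiconformal map $w^{f^{*}\mu}$ is built precisely so that the complex dilatation of $f\circ(w^{f^{*}\mu})^{-1}$ agrees with $\mu$ pulled back through $f$, and post-composing with $w^{\mu}$ (which straightens $\mu$) cancels the remaining dilatation. Thus $g$ is a holomorphic map of the same combinatorial type as $f$, and the isotopy statement above is exactly the condition $\phi^{-1}\circ\psi\sim\mathrm{id}$ rel $P_{f}$ required in Definition \ref{combeq}, so $f$ and $g$ are combinatorially equivalent.

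For the converse I would run the same argument backwards. Assume $f$ is combinatorially equivalent to a holomorphic $E=Pe^{Q}$ (or a meromorphic map in $\M_2$), with homeomorphisms $\phi,\psi$ satisfying $\phi\circ f=g\circ\psi$ and $\phi^{-1}\circ\psi$ isotopic to the identity rel $P_{f}$. Pulling back the standard conformal structure by $\psi$ produces a Beltrami coefficient $\mu=\mu_{\psi}\in\mathcal M$; I would verify that the point $[\mu]\in T_{f}$ is fixed by $\sigma$. The holomorphicity of $g$ forces the dilatation of $f$ measured in the $\mu$-structure to be carried by $\phi$, which is precisely the statement that $w^{f^{*}\mu}$ and $w^{\mu}$ differ by a map isotopic to the identity rel $P_{f}$; hence $[f^{*}\mu]=[\mu]$. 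One should also check that the realization $g$ genuinely lands in the correct holomorphic family rather than merely being some holomorphic map: here Theorem \ref{topexp} (for $\TE_{p,q}$) and Proposition \ref{prop2} (for $\T2$) do the work, since any analytic element of the topological class is forced to be a $(p,q)$-exponential map, respectively a function in $\M_2$.

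The main obstacle, and the step I would treat most carefully, is the bookkeeping in the forward direction showing $g$ is globally holomorphic rather than just locally so away from the singular set. Because $f$ has infinite degree and an essential singularity (or a puncture at $0$), the covering structure near the asymptotic value and near $\infty$ must be checked: one must confirm that the conjugated map $g=\phi\circ f\circ\psi^{-1}$ extends across these distinguished points and that the normalization fixing $0,1,\infty$ is consistent with the normalizations built into $\TE_{p,q}$ and $\T2$ (the asymptotic value at $0$, the value $f(0)=1$). Away from the finitely many singular points the vanishing of $\mu_g$ is a routine application of the chain rule for Beltrami coefficients together with \eqref{pullbackformula}; the content is that the finitely many exceptional points are removable singularities for the holomorphic structure, so $g$ is genuinely an entire or meromorphic map of the desired type. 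Everything else is a direct translation between the isotopy-rel-$P_f$ definition of $T_{f}$ and the isotopy condition in Definition \ref{combeq}.
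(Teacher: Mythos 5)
Your proposal is correct and follows essentially the same route as the paper: both directions unwind the Teichm\"uller equivalence relation, use the pullback formula~(\ref{pullbackformula}) to see that $w^{\mu}\circ f\circ (w^{f^{*}\mu})^{-1}$ is holomorphic, and invoke Theorem~\ref{topexp} (resp.\ Proposition~\ref{prop2}) to place the realization in the correct family. The only cosmetic difference is in the converse: the paper takes $\mu=\mu_{\phi}$, so that $f^{*}\mu=\mu_{\psi}$ and the isotopy condition immediately gives $[f^{*}\mu]=[\mu]$, whereas your choice $\mu=\mu_{\psi}$ needs the extra (harmless) observation that $[\mu_{\psi}]=[\mu_{\phi}]$ and that $\sigma$ is well defined on classes.
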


\begin{proof}
Suppose $\sigma$ has a fixed point $\tau=[\mu]$, that is, $\sigma (\tau)=[f^{*}\mu]=\tau=[\mu]$. This implies that 
$w^{\mu}$ and $w^{f^{*}\mu}$ are isotopy rel $P_{f}$. Using Formula (\ref{pullbackformula}), one can check that 
$$
E= w^{\mu}\circ f\circ (w^{f^{*}\mu})^{-1}
$$
is holomorphic. This says that $f$ is combinatorially equivalent to $E(z) = P(z) e^{Q(z)}$. This proves the ``if'' part.

For the ``only if'' part, suppose there are two quasiconformal homeomorphisms $\phi$ and $\psi$  as in Definition~\ref{combeq} such that $\phi$ and $\psi$ are isotopic rel $P_{f}$ and $E=\phi \circ f\circ \psi^{-1}=Pe^{Q}$. Let $\mu$ be the Beltrami coefficient of $\phi$. Then the last equality implies that $f^{*}\mu$ is the Beltrami coefficient of $\psi$. Thus
$w^{\mu}$ and $w^{f^{*}\mu}$ are isotopic rel $P_{f}$. This implies that $\sigma (\tau)=\tau$ as required. 
\end{proof}

\section{Bounded Geometry}
\label{sec:bounded geometry}

For any $\tau_{0}\in T_{f}$, let $\tau_{n}=\sigma^{n}(\tau_{0})$, $n\geq 1$. The iteration sequence $\tau_{n}=[\mu_{n}]$ determines a sequence of finite subsets
$$
P_{f,n} = w^{\mu_{n}}(P_{f}), \quad n=0, 1, 2, \cdots.
$$
Since all $w^{\mu_{n}}$ fix $0, 1, \infty$, it follows that $0, 1, \infty\in P_{f,n}$.   Note that $P_{f,n}$ depends only on $\tau_{n}$ and not $\mu_{n} $ by the Teichm\"uller equivalence relation.

\begin{definition}[Spherical Version]
We say $f$ has {\em bounded geometry} if there is a constant $b>0$ and a point $\tau_{0}\in T_{f}$ such that
$$
d_{sp} (p_{n},q_{n}) \geq b
$$
for $p_{n}, q_{n}\in  P_{f,n}$ and $n\geq 0$. Here
$$
d_{sp}(z,z')= \frac{|z-z'|}{\sqrt{1+|z|^{2}}\sqrt{1+|z'|^{2}}}
$$
is the spherical distance on $\hat{\mathbb C}$.
\end{definition}

Note that $d_{sp}(z, \infty) = \frac{|z|}{\sqrt{1+|z|^2}}$.
Away from infinity the spherical metric and Euclidean metric are equivalent.
Precisely, for any bounded  $S \subset \mathbb C$,
there is a constant $C>0$ which depends only on $S$ such that
$$
C^{-1} d_{sp}(x,y) \leq |x-y|\leq Cd_{sp}(x,y)\quad  \forall  x,y \in S.
$$

Consider the hyperbolic Riemann surface $R=\hat{\mathbb C}\setminus P_{f}$
equipped with the standard complex structure as the basepoint $\tau_{0}=[0]\in T_{f}$.
A point $\tau$ in $T_{f}$ defines another complex structure $\tau$ on $R$.
Denote by $R_{\tau}$  the hyperbolic Riemann surface  $R$ equipped with the complex structure $\tau$.

A simple closed curve $\gamma\subset R$ is called {\it non-peripheral} if each component of $\hat{\mathbb C}\setminus \gamma$ contains at least two points
of $P_{f}$. Let $\gamma$ be a non-peripheral simple closed curve in $R$.
For any $\tau=[\mu]\in T_{f}$, let $l_{\tau}(\gamma)$ be the hyperbolic length of the unique closed geodesic homotopic to $\gamma$ in $R_{\tau}$.
The bounded geometry property can be stated in terms of  hyperbolic geometry as follows.

\begin{definition}[Hyperbolic version]
We say $f$ has {\em bounded geometry}  if there is a constant $a>0$ and a point $\tau_{0}\in T_{f}$ such that
 $l_{\tau_{n}}(\gamma)\geq a$ for all $n\geq 0$ and all non-peripheral simple closed curves $\gamma$ in $R$.
\end{definition}

The above definitions of bounded geometry are equivalent because of the following lemma and the fact that we have normalized so that $0,1,\infty$ always belong to $P_f$.

\vspace*{5pt}
\begin{lemma}~\label{sg} Consider the hyperbolic Riemann surface
$\hat{\mathbb C}\setminus X$, where $X$ is a finite subset of $\hat\CC$ such that $0, 1, \infty \in X$, equipped with the standard complex structure.
Let $a>0$ be a constant. If every simple closed geodesic in $\hat{\mathbb C}
\setminus X$ has hyperbolic length greater than $a$, then the
spherical distance between any two distinct points in $X$ is bounded below by a bound $b>0$ which depends only on $a$ and $m=\#(X)$.
\end{lemma}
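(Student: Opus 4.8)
The plan is to argue by contradiction, exploiting the contrapositive principle that two (or more) points of $X$ crowding together in the spherical metric must force a short non-peripheral geodesic. Since the desired bound $b$ is to depend only on $a$ and $m$, the natural framework is a compactness argument: I would suppose the lemma fails for some fixed pair $(a,m)$ and produce a sequence of finite sets $X_k\subset\hat{\mathbb C}$ with $\#X_k=m$, with $0,1,\infty\in X_k$, such that every simple closed geodesic of $\hat{\mathbb C}\setminus X_k$ has length $>a$, yet the minimal spherical distance $\delta_k=\min\{d_{sp}(p,q):p\neq q\in X_k\}$ tends to $0$. The goal is then to extract from the collapse of points a non-peripheral simple closed curve whose hyperbolic length tends to $0$, contradicting the uniform lower bound $a$. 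This is, in essence, Mumford's compactness criterion for $\mathcal{M}_{0,m}$ made explicit, but I would carry it out by hand to keep the argument self-contained.

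First I would use compactness of the sphere: passing to a subsequence, each of the $m$ points of $X_k$ converges, say $x_i^k\to x_i^*\in\hat{\mathbb C}$, where three of the indices are held fixed at $0,1,\infty$. Because $\delta_k\to 0$, the limit values $x_i^*$ are not all distinct, so I would partition the indices into clusters according to which limit point they approach, obtaining $r<m$ distinct limits. The key combinatorial observation is that the normalization $0,1,\infty\in X_k$ forces at least three clusters, since these three fixed points are distinct. Consequently no cluster can have $m-1$ elements (that would leave at most two clusters), so every cluster has at most $m-2$ elements; and since $r<m$, some cluster $I$ has at least two elements. This yields a cluster $C_k=\{x_i^k:i\in I\}$ with $2\le\#C_k\le m-2$, so that both $C_k$ and its complement $X_k\setminus C_k$ contain at least two points of $X_k$.

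Next I would build the short geodesic. Let $p_*$ be the common limit of $C_k$; the remaining points of $X_k$ stay at spherical distance bounded below from $p_*$, while $C_k$ shrinks to $p_*$. Working in a coordinate chart centered at $p_*$, I would insert a round annulus $A_k\subset\hat{\mathbb C}\setminus X_k$ separating the shrinking cluster $C_k$ from $X_k\setminus C_k$, whose modulus $\operatorname{mod}(A_k)\to\infty$. Its core curve $\gamma_k$ is non-peripheral precisely because both complementary components carry at least two points of $X_k$. Finally, the inclusion $A_k\hookrightarrow\hat{\mathbb C}\setminus X_k$ and the Schwarz comparison of hyperbolic metrics of a subdomain give the estimate $\ell_{\hat{\mathbb C}\setminus X_k}(\gamma_k)\le \pi/\operatorname{mod}(A_k)$, using that a round annulus of modulus $M$ has core geodesic of hyperbolic length $\pi/M$. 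As $\operatorname{mod}(A_k)\to\infty$ this length drops below $a$ for large $k$, the contradiction sought.

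I expect the main obstacle to be the combinatorial bookkeeping in the second step, namely guaranteeing a cluster $C$ with $2\le\#C\le m-2$ so that the separating curve is genuinely non-peripheral. The danger is a configuration where all but one point coalesce, which would produce only a peripheral loop; the fixed, distinct triple $0,1,\infty$ is exactly what rules this out and makes the modulus–length estimate applicable. A secondary routine point is verifying the hyperbolic length of the core of a modulus-$M$ round annulus is $\pi/M$, together with the monotonicity $\rho_{\hat{\mathbb C}\setminus X_k}\le\rho_{A_k}$ on $A_k$; both are standard, but I would state them carefully to make the final inequality clean.
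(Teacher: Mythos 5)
Your proposal is correct. Note that the paper itself does not prove this lemma --- it defers the proof to~\cite{CJK} --- so there is no in-paper argument to compare against; judged on its own, your compactness argument is a complete and standard proof. The two points that actually carry the proof are both handled properly: (i) the pigeonhole step showing that the fixed triple $0,1,\infty$ forces at least three clusters, so the cluster containing a coalescing pair has between $2$ and $m-2$ elements and its separating curve is genuinely non-peripheral; and (ii) the modulus--length estimate $\ell_{\hat{\mathbb C}\setminus X_k}(\gamma_k)\le \pi/\operatorname{mod}(A_k)$ via monotonicity of the hyperbolic metric under inclusion, applied to the geodesic representative of the core curve. The only trade-off worth noting is that the contradiction/compactness framework produces a non-effective bound $b(a,m)$; a direct argument (a quantitative collar-type estimate, peeling off nested separating annuli around the closest pair) would give an explicit $b$, but the lemma as stated only requires existence, so your route is entirely adequate and arguably cleaner.
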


The reader can refer to~\cite{CJK} for a detailed proof. So we omit it in this paper.

\section{The Proof of Necessity}
\label{sec:main result}

Our  theorems  have two parts: the necessity and sufficiency of the bounded geometry condition. The necessity is relatively easy and can be proved once for all cases together.  
We prove the following statement. 

\vspace*{5pt}
\begin{theorem}[Necessity]~\label{necc}
If a post-singularly finite map $f\in {\mathcal TE}_{p,q}$ (or $ \T2$) is combinatorially equivalent to a $(p,q)$-exponential map $E=Pe^{Q}\in \E_{p,q}$ (or a meromorphic map $g\in \M2$), then $f$ has bounded geometry.
\end{theorem}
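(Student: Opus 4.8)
The plan is to exploit the fixed-point characterization of Lemma~\ref{fixedpt} together with the hyperbolic version of bounded geometry, and to take advantage of the fact that the basepoint of the iteration may be chosen freely. By hypothesis $f$ is combinatorially equivalent to a holomorphic map of the same type, so Lemma~\ref{fixedpt} produces a fixed point $\tau_{*}=[\mu_{*}]\in T_{f}$ of the induced map $\sigma=\sigma_{f}$. The entire idea is to start the Thurston iteration at this fixed point, i.e. to set $\tau_{0}=\tau_{*}$.

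With this choice the iteration sequence is constant: $\tau_{n}=\sigma^{n}(\tau_{0})=\tau_{*}$ for every $n\geq 0$. Consequently the marked hyperbolic surface $R_{\tau_{n}}=R_{\tau_{*}}$ is a single fixed Riemann surface, namely $\hat{\mathbb C}\setminus P_{f}$ carried by the complex structure $\tau_{*}$. Since we always assume $\#(P_{f})\geq 4$, this is a genus zero surface of finite type with at least four punctures and is therefore hyperbolic. For every non-peripheral simple closed curve $\gamma$ in $R$ the hyperbolic length is then independent of $n$, namely $l_{\tau_{n}}(\gamma)=l_{\tau_{*}}(\gamma)$.

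It remains to produce the uniform lower bound. On a fixed hyperbolic Riemann surface of finite type the length spectrum is discrete and proper, so only finitely many closed geodesics have length below any prescribed bound; hence the infimum
$$
a=\inf_{\gamma}\, l_{\tau_{*}}(\gamma),
$$
taken over all non-peripheral simple closed curves $\gamma$, is attained and is strictly positive (it is the systole of $R_{\tau_{*}}$). Therefore $l_{\tau_{n}}(\gamma)=l_{\tau_{*}}(\gamma)\geq a>0$ for all $n\geq 0$ and all non-peripheral simple closed $\gamma$, which is exactly the hyperbolic version of bounded geometry for the basepoint $\tau_{0}=\tau_{*}$. (By Lemma~\ref{sg} this is equivalent to the spherical version, so $f$ has bounded geometry in either formulation.)

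The only point requiring any care is the assertion that the systole of a finite-type hyperbolic surface is positive and attained, and this is a standard fact. Thus once the iteration is started at a fixed point of $\sigma$, the statement collapses to the observation that a single fixed hyperbolic surface has positive systole; there is no genuine obstacle in the necessity direction, and essentially all of its content has already been absorbed into the fixed-point lemma.
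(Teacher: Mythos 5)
Your argument is correct and is essentially identical to the paper's own proof: both take the fixed point of $\sigma$ furnished by Lemma~\ref{fixedpt} as the basepoint $\tau_{0}$, observe that the iteration sequence is then constant, and conclude by noting that the shortest closed geodesic (systole) of the single fixed hyperbolic surface $\hat{\mathbb C}\setminus P_{f}$ gives the required uniform lower bound in the hyperbolic version of bounded geometry. The only cosmetic difference is that you spell out the discreteness of the length spectrum and the equivalence with the spherical version via Lemma~\ref{sg}, which the paper leaves implicit.
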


\begin{proof}
If $f$ is combinatorially equivalent to $E=Pe^{Q}\in \E_{p,q}$ (or $g\in \M2$), then
$\sigma$ has a  fixed point $\tau_{0}$ so that $\tau_{n}=\tau_{0}$ for all $n$.  The complex structure on $\hat{\mathbb C} \setminus P_f$  defined by $\tau_0$ induces  a hyperbolic metric on it.   The shortest closed geodesic in this metric gives a lower bound on the lengths  of all geodesics  so that  $f$ satisfies the hyperbolic definition of bounded geometry.
\end{proof}

\section{Sufficiency under Compactness}
\label{sec:suff}

The proof of the sufficiency of bounded geometry  in our theorems is more complicated and needs some preparatory material.
There are two parts:  one is a compactness argument and the other is a fixed point argument.
Once one has compactness, the proof of the fixed point argument  is quite standard (see~\cite{Ji}) and works for any $f\in {\mathcal TE}_{p,q}$ and any $f\in \T2$. 
This is the content of Theorem~\ref{main1}  whose proof we give  in this section. We only give the details  for $f\in {\mathcal TE}_{p,q}$. For $f\in \T2$, the proof is similar  but uses  different notation (see~\cite{CJK}).

The normalized functions in ${\mathcal E}_{p,q}$ are determined by the $p+q+1$ coefficients of the polynomials $P$ and $Q$.
This identification defines an embedding into $\CC^{p+q+1}$ and hence a topology on ${\mathcal E}_{p,q}$.

Given $f\in\TE_{p,q}$ and given any $\tau_{0}=[\mu_{0}]\in T_f$, let $\tau_{n}=\sigma^{n} (\tau_{0}) =[\mu_{n}]$
be the sequence generated by $\sigma$. Let $w^{\mu_{n}}$ be the normalized quasiconformal map with Beltrami coefficient $\mu_{n}$.
Then
$$
E_{n} = w^{\mu_{n}}\circ f\circ (w^{\mu_{n+1}})^{-1}\in {\mathcal E}_{p, q}
$$
since it preserves
$\mu_0$ and hence is holomorphic.
This gives a sequence $\{E_{n}\}_{n=0}^{\infty}$ of maps in ${\mathcal E}_{p,q}$ and a sequence of subsets
$P_{f,n}=w^{\mu_{n}} (P_{f})$.
Note that $P_{f,n}$ is not, in general, the post-singular set $P_{E_{n}}$ of $E_{n}$.

\medskip
\noindent {\bf The compactness condition.}  We say $f$ satisfies the compactness condition if the sequence $\{ E_n \}_{n=1}^{\infty}$ generated  in the Thurston iteration scheme is contained in a compact subset of $ \E_{p,q}$.

\medskip
From a conceptual point of view, the compactness condition is very natural and simple. From a technical point of view, however, it is not at all obvious.  We give a detailed proof showing how to get a fixed point assuming both bounded geometry and compactness. 

Suppose $f$ is a post-singularly finite topological exponential map in ${\mathcal TE}_{p,q}$.
For any $\tau=[\mu]\in T_{f}$, let $T_{\tau} $ and $T^{*}_{\tau} $ be the tangent space and the cotangent space of $T_{f}$ at $\tau$ respectively.  Let $w^{\mu}$ be the corresponding normalized quasiconformal map fixing $0,1, \infty$.
Then $T^{*}_{\tau} $ coincides with the space ${\mathcal Q}_{\mu}$ of integrable meromorphic quadratic differentials $q=\phi(z) dz^{2}$.  Integrablility means that  the norm of $q$,  defined by
$$
||q|| =\int_{\hat{\mathbb C}} |\phi(z)| dzd\overline{z}
$$
is finite.  This condition implies that  the  poles of $q$  must occur at points of $w^{\mu} (P_{f})$ and that these poles are simple.

Set $\tilde{\tau}=\sigma(\tau)=[\tilde{\mu}]$ and denote by $w^{\mu}$ and $w^{\tilde{\mu}}$   the corresponding  normalized quasiconformal maps. We have the following commutative
diagram:
$$
\begin{array}{ccc} \hat{\mathbb C}\setminus f^{-1}(P_{f}) & {\buildrel w^{\tilde{{\mu}}} \over
\longrightarrow} & \hat{\mathbb C}\setminus w^{\tilde{{\mu}}} (f^{-1}(P_{f}))\cr \downarrow f
&&\downarrow E_{\mu,\tilde{\mu}}\cr \hat{\mathbb C}\setminus P_{f}& {\buildrel w^{\mu}
\over \longrightarrow} & \hat{\mathbb C}\setminus w^{\mu}(P_{f}).
\end{array}
$$
Note that in the diagram, by abuse of notation, we write  $f^{-1}(P_{f})$ for  $f^{-1}(P_{f}\setminus \{\infty\})\cup\{\infty\}$.  Since by definition  $\tilde{\mu} = f^*\mu$, the map
  $E=E_{\mu,\tilde{\mu}}=  w^{\mu} \circ f \circ  (w^{\tilde{\mu}})^{-1}$ defined on ${\mathbb C}$
is analytic. By Theorem~\ref{topexp}, $E_{\mu,\tilde{\mu}}=P_{\tau, \tilde{\tau}}e^{Q_{\tau,\tilde{\tau}}}$ for a pair of polynomials $P=P_{\tau, \tilde{\tau}}$ and $Q=Q_{\tau,\tilde{\tau}}$ of respective degrees $p$ and $q$.

Let $\sigma_{*}: T_{\tau} \to T_{\tilde{\tau}} $ and  $\sigma^{*}: T_{\tilde{\tau}}^* \to T_{\tau}^* $ 
be the tangent and co-tangent maps of $\sigma$, respectively. Take a co-tangent vector 
$\tilde{q}=\tilde{\phi} (w) dw^{2}$ in $T^{*}_{\tilde{\tau}}$. Let $q=\sigma^{*} \tilde{q}$ be the corresponding co-tangent vector in $T^{*}_{\tau}$.   Then $q$
is also the push-forward integrable quadratic differential of $\tilde{q}$ by $E$
\begin{equation}~\label{push}
q=E_{*}\tilde{q}=\phi(z) dz^2.
\end{equation}

   To see this, recall from section~\ref{sec:Tpq} that $E$, and a choice of curves $L_i$ from the branch points, determine a finite set of domains $W_i$ on which $E$ is an unbranched covering to a domain homeomorphic to $\CC^*$.  Since $E$ restricted to each $W_i$ is either a topological model for $e^z$ or $z^k$, we may divide each $W_i$ into a collection of  fundamental domains on which $E$ is bijective. Therefore the coefficient $\phi (z)$ of $q$ is given by the formula
\begin{equation}~\label{pushforwardformula}
\phi(z)= ({\mathcal L}\tilde{\phi}) (z) =\sum_{E(w) = z}
\frac{\tilde{\phi} (w)}{(E'(w))^{2}} = \frac{1}{z^{2}} \sum_{E(w)=z} \frac{\tilde{\phi} (w)}{(\frac{P'(w)}{P(w)} +Q'(w))^{2}}
\end{equation}
Here ${\mathcal L}$ is called a transfer operator in thermodynamical formalism.
Following some standard calculations (see e.g.~\cite{Ji}) on transfer operators, we have, 

\begin{equation}~\label{cotangcontracting}
||q||<||\tilde{q}||.
\end{equation}

\begin{remark}
The main point in the calculations is to note that $E$ has infinite degree   and   $q$ has finitely many poles.  If  there were a $\tilde{q}$ with  $||q||=||\tilde{q}||\not= 0$ and poles comprising a set $Z$, then
the poles of $q$ would be contained in the set $E(Z)\cup {\mathcal V}_{E}$, where ${\mathcal V}_{E}$ is the set of critical values of $E$.
Thus, by formula~(\ref{pushforwardformula}),
$$
E^{*} q =\phi (E(w)) dw^2 = n \tilde{q} (w),
$$
where $n$ is the degree of $E$. Furthermore,
$$
E^{-1} (E(Z)\cup {\mathcal{V}}_{E}) \subseteq Z \cup {\Omega}_{E}.
$$
Since  $n$ is infinite,  the last inclusion formula can not hold  because the left hand side is infinite and the right hand side is finite.
\end{remark}

An  immediate corollary  of inequality (\ref{cotangcontracting}) is
\begin{corollary}~\label{strongcon}
For any two points $\tau$ and $\tilde{\tau}$ in $T_{f}$,
$$
d_{T}\Big(\sigma(\tau), \sigma(\tilde{\tau})\Big)< d_{T}(\tau, \tilde{\tau}).
$$
\end{corollary}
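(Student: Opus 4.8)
The plan is to derive the strict contraction of the Teichm\"uller distance from the strict contraction of the $L^1$-norm on quadratic differentials, inequality~(\ref{cotangcontracting}), by passing to the infinitesimal (Finsler) form of the metric and then integrating along a geodesic. The only new ingredient beyond Lemma~\ref{contractive}, where holomorphicity alone already yielded the non-strict inequality $d_T(\sigma(\tau),\sigma(\tilde\tau))\le d_T(\tau,\tilde\tau)$, is the finite-dimensionality of $T_f$; this is exactly what upgrades the pointwise strict inequality into a uniform contraction factor at each point.

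First I would recall Royden's theorem: on the finite-dimensional Teichm\"uller space $T_f$ the infinitesimal Kobayashi metric coincides with the Teichm\"uller--Finsler metric, and the latter is dual to the $L^1$-norm $\|q\|=\int|\phi|$ on the cotangent space. Concretely, for a tangent vector $v\in T_\tau$,
\[
\|v\|_\tau=\sup\{\,|\langle v,q\rangle|\;:\;q\in T^*_\tau,\ \|q\|\le 1\,\},
\]
where $\langle\cdot,\cdot\rangle$ is the natural pairing between tangent and cotangent spaces. By definition of the cotangent map, $\sigma^*$ is the transpose of $\sigma_*$, so that $\langle\sigma_* v,\tilde q\rangle=\langle v,\sigma^*\tilde q\rangle$ for all $v\in T_\tau$ and $\tilde q\in T^*_{\tilde\tau}$; in the notation of the excerpt, $\sigma^*\tilde q=E_*\tilde q$ is exactly the push-forward differential, so inequality~(\ref{cotangcontracting}) reads $\|\sigma^*\tilde q\|<\|\tilde q\|$ for every $\tilde q\neq 0$.

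Next I would turn this pointwise inequality into a uniform bound. Since $T^*_{\tilde\tau}$ is finite-dimensional, its unit sphere is compact and $\tilde q\mapsto\|\sigma^*\tilde q\|$ is continuous, hence attains a maximum $c=c(\tau)<1$ there; thus $\|\sigma^*\tilde q\|\le c\|\tilde q\|$ for all $\tilde q$. Combining this with the duality and the adjoint relation gives, for every $v\in T_\tau$,
\[
\|\sigma_* v\|_{\tilde\tau}=\sup_{\|\tilde q\|\le 1}|\langle v,\sigma^*\tilde q\rangle|\le\sup_{\|q\|\le c}|\langle v,q\rangle|=c\,\|v\|_\tau,
\]
so $\sigma$ strictly contracts the infinitesimal metric by the factor $c(\tau)<1$ at each point.

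Finally I would integrate. Let $\gamma:[0,1]\to T_f$ be the Teichm\"uller geodesic joining $\tau$ and $\tilde\tau$, which exists and is unique because $T_f$ is a finite-dimensional Teichm\"uller space; parametrize it proportionally to arclength so that $\|\gamma'(t)\|_{\gamma(t)}\equiv d_T(\tau,\tilde\tau)$. Then $\sigma\circ\gamma$ joins $\sigma(\tau)$ and $\sigma(\tilde\tau)$, and
\[
d_T(\sigma(\tau),\sigma(\tilde\tau))\le\int_0^1\|\sigma_*\gamma'(t)\|_{\sigma(\gamma(t))}\,dt\le\int_0^1 c(\gamma(t))\,\|\gamma'(t)\|_{\gamma(t)}\,dt<\int_0^1\|\gamma'(t)\|_{\gamma(t)}\,dt=d_T(\tau,\tilde\tau),
\]
the strict inequality holding whenever $\tau\neq\tilde\tau$, since $c(\gamma(t))<1$ for every $t$ while the integrand $\|\gamma'(t)\|$ is a positive constant. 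I expect the only real obstacle to be the finite-dimensionality step: it is precisely what fails in infinite dimensions and what makes the uniform factor $c<1$, and hence the strict rather than merely non-strict contraction, available here. The appeals to Royden's duality and to the existence of a Teichm\"uller geodesic are standard and can be cited.
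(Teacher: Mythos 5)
Your proposal is correct and takes essentially the same route the paper intends: the corollary is stated there as an immediate consequence of inequality~(\ref{cotangcontracting}), and the paper's own proof of Theorem~\ref{main1} later makes the identical duality step $\|\sigma_{*}\|\le\|\sigma^{*}\|<1$ followed by integration along a curve joining the two points. You have simply supplied the standard supporting details (Royden--Gardiner duality between the infinitesimal Teichm\"uller metric and the $L^{1}$-norm on quadratic differentials, compactness of the unit sphere in the finite-dimensional cotangent space, and integration along the Teichm\"uller geodesic), all of which are sound.
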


 In addition,  inequality (\ref{cotangcontracting}) also implies uniqueness. 

\begin{corollary}
If $\sigma$ has a fixed point in $T_{f}$, then this fixed point must be unique. This is equivalent to saying  that
 a post-singularly finite $f$ in ${\TE}_{p, q}$  is combinatorially equivalent to at most one  $(p,q)$-exponential map $E=Pe^{Q}$.
\end{corollary}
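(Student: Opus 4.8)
The plan is to derive uniqueness of the fixed point from the strict contraction recorded in Corollary~\ref{strongcon}, and then to read off the statement about realizations through Lemma~\ref{fixedpt}. First I would argue by contradiction. Suppose $\sigma$ had two distinct fixed points $\tau_{1}\neq\tau_{2}$ in $T_{f}$. Since $\sigma(\tau_{1})=\tau_{1}$ and $\sigma(\tau_{2})=\tau_{2}$, applying Corollary~\ref{strongcon} to the pair $(\tau_{1},\tau_{2})$ yields
\[
d_{T}(\tau_{1},\tau_{2})=d_{T}\big(\sigma(\tau_{1}),\sigma(\tau_{2})\big)<d_{T}(\tau_{1},\tau_{2}).
\]
Because $d_{T}$ is a genuine metric on $T_{f}$, the assumption $\tau_{1}\neq\tau_{2}$ forces $d_{T}(\tau_{1},\tau_{2})>0$, so the displayed strict inequality is impossible. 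Hence $\sigma$ has at most one fixed point. Note that, unlike Lemma~\ref{contractive}, it is exactly the \emph{strict} inequality of Corollary~\ref{strongcon} (which came from the infinite-degree estimate $\|q\|<\|\tilde q\|$) that is doing the work here; the non-strict contraction alone would not rule out two fixed points.

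Next I would transfer this to the statement about maps using the correspondence set up in Lemma~\ref{fixedpt}. Its proof shows how to pass in both directions: a combinatorial equivalence $(\phi,\psi)$ with $E=\phi\circ f\circ\psi^{-1}$ and $\phi\simeq\psi$ rel $P_{f}$ produces the fixed point $\tau=[\mu]$, where $\mu$ is the Beltrami coefficient of $\phi$; conversely a fixed point $\tau=[\mu]$ produces the holomorphic map $E=w^{\mu}\circ f\circ(w^{f^{*}\mu})^{-1}\in\E_{p,q}$. Consequently, if $f$ were combinatorially equivalent to two $(p,q)$-exponential maps $E_{1}$ and $E_{2}$, each equivalence would furnish a fixed point of $\sigma$, and by the uniqueness just established these two fixed points must coincide. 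It remains only to see that a single fixed point cannot yield two genuinely different realizations.

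The step I expect to be the main obstacle is precisely this last point: a single class $\tau\in T_{f}$ determines the realizing map only up to the conformal (affine) equivalence under which we have already agreed, in Section~\ref{sec:epq}, to identify the members of $\E_{p,q}$. Concretely, if $E_{1}=w^{\mu}\circ f\circ(w^{f^{*}\mu})^{-1}$ and $E_{2}=w^{\mu'}\circ f\circ(w^{f^{*}\mu'})^{-1}$ are built from two representatives $\mu,\mu'$ of the same class $\tau$, then setting $g=w^{\mu'}\circ(w^{\mu})^{-1}$ and $\tilde g=w^{f^{*}\mu'}\circ(w^{f^{*}\mu})^{-1}$ one computes $E_{2}=g\circ E_{1}\circ\tilde g^{-1}$, where $g$ and $\tilde g$ are quasiconformal, fix $0,1,\infty$, and are isotopic to the identity rel the relevant finite sets because $[\mu]=[\mu']$. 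I would then argue that, since $E_{1}$ and $E_{2}$ are both holomorphic, the Beltrami coefficient of $g$ is carried by $E_{1}$ to that of $\tilde g$; a rigidity argument of the same flavor as the one underlying Corollary~\ref{strongcon} (again exploiting that $E_{1}$ has infinite degree while the relevant differentials have only finitely many poles) forces this common invariant coefficient to vanish, so that $g$ and $\tilde g$ are conformal, hence affine since they fix $0$ and $\infty$. Therefore $E_{1}$ and $E_{2}$ are conformally equivalent, i.e.\ equal under the identification of Section~\ref{sec:epq}. This establishes that $f$ is combinatorially equivalent to at most one $(p,q)$-exponential map $E=Pe^{Q}$, consistent with the uniqueness ``up to conjugation by an affine map'' already asserted in Theorem~\ref{main1}. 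The delicate piece throughout is exactly this passage from ``same Teichm\"uller point'' to ``same map up to affine conjugacy,'' and it is where the normalization $0,1,\infty\in P_{f}$ and the infinite-degree contraction are indispensable.
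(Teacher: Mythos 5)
Your proposal is correct and follows essentially the same route as the paper, which proves this corollary implicitly in one line by invoking the strict contraction of Corollary~\ref{strongcon} (itself a consequence of the infinite-degree inequality $\|q\|<\|\tilde q\|$) to rule out two distinct fixed points, and then translating via Lemma~\ref{fixedpt}. Your additional care in checking that a single Teichm\"uller class yields a realization well defined up to affine conjugacy is a detail the paper absorbs into its standing convention of identifying conformally equivalent maps in $\E_{p,q}$, but it is consistent with, not different from, the paper's argument.
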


We can now finish the proof of the sufficiency in Theorem~\ref{main1}.

\begin{proof}[Proof of Theorem~\ref{main1}] 
Suppose $f\in {\TE}_{p,q}$ has both bounded geometry and compactness. Suppose $\tau_{0}=[\mu_{0}]$ satisfies  the bounded geometry condition and $\mu_{n}$ is defined by $\sigma^{n} (\tau_{0}) =[\mu_{n}]$. 
Recall that the map defined by
\begin{equation}\label{thu iter}
E_{n}=w^{\mu_{n}}\circ f\circ (w^{\mu_{n+1}})^{-1}
\end{equation}
is a $(p,q)$-exponential map.

If $q=0$, $E_n$ is a polynomial and the theorem follows from the arguments given in~\cite{CJ} and \cite{DH}.
Note that if $P_f = \{0,1,\infty\}$, then $f$ is a universal covering map of $\mathbb{C}^*$ and is therefore combinatorially equivalent to $e^{2 \pi i z}$.
Thus in the following argument, we assume that $\#(P_f) \geq 4$. Then, given our normalization conventions and the bounded geometry hypothesis we see that
the functions  $E_{n}$, $n=0, 1, \ldots$ satisfy the following conditions:
\begin{itemize}
\item[1)] $m=\#(w^{\mu_{n}}(P_f))\geq 4$ is fixed.
\item[2)] $0, 1, \infty \in w^{\mu_{n}}(P_f)$.
\item[3)] $\Omega_{E_{n}}\cup \{ 0, 1, \infty\} \subseteq
E_{n}^{-1}(w^{\mu_{n}}(P_f))$.
\item[4)] there is a $b>0$ such that $d_{sp}(p_{n}, q_{n}) \geq b$ for any $p_{n}, q_{n}\in w^{\mu_{n}}(P_f)$.
\end{itemize}

As a consequence of the compactness, we have that in the sequence $\{ E_{n}\}_{n=1}^{\infty}$, there is a subsequence $\{ E_{n_{i}}\}_{i=1}^{\infty}$ converging to a map $E=Pe^{Q} \in \E_{p,q}$ where $P$ and $Q$ are polynomials of degrees $p$ and $q$ respectively.

Any integrable quadratic differential $q_{n} \in T^{*}_{\tau_{n}}$ has, at worst, simple poles in the finite set  $P_{f,n}= w^{\mu_{n}}(P_f)$.
Since $T^{*}_{\tau_{n}}$ is a finite dimensional linear space, there is a quadratic differential $q_{n, max}\in T^{*}_{\tau_{n}}$
with $\| q_{n,max}\|=1$ such that
$$
0 \leq a_{n}=\sup_{||q_{n}||=1} \|(E_{n})_{*}q_{n}|| = \|(E_{n})_{*}q_{n,max}\| <1.
$$  
Moreover, by the bounded geometry condition any simple poles of $\{q_{n, max}\}_{n=1}^{\infty}$ lie in a
  compact set and hence these quadratic differentials lie in a compact subset of the  space of quadratic differentials on $\hat{\mathbb C}$ with, at worst, simples poles at $m=\#(P_{f})$ points.

Let
$$
a_{\tau_{0}} =\sup_{n\geq 0} a_{n}.
$$
Let  $\{n_{i}\}$  be a sequence of  integers such that the subsequence $a_{n_{i}}\to a_{\tau_{0}}$ as $i\to \infty$.
By compactness, $\{E_{n_{i}}\}_{i=0}^{\infty}$ has
a convergent subsequence, (for which we  use the same notation)
that converges to a holomorphic  map $E \in \E_{p,q}$.
Taking a further subsequence if necessary, we obtain a convergent sequence of sets $P_{n_{i},\tau_{0}} =w^{\mu_{n_{i}}}(P_{f})$  with limit set $X$.
By bounded geometry, $\#(X)=\#(P_{f})$ and  $d_{sp}(x, y) \geq b$ for any $x,y\in X$.
Thus we can find a   subsequence $\{ q_{n_{i}, max}\}$ converging to an integrable quadratic differential $q$ of norm $1$  whose only poles lie in $X$ and are simple.
Now by inequality (\ref{cotangcontracting}), we have that
$$
a_{\tau_{0}} = ||E_{*} q|| <1.
$$

Thus we have proved  that there is an  $0< a_{\tau_{0}}< 1$, depending only on $b$ and $f$, such that
$$
\|\sigma_{*}\| \le
\|\sigma^{*}\| \le a_{\tau_{0}}.
$$
Let $l_{0}$ be a curve connecting $\tau_{0}$ and $\tau_{1}$ in $T_{f}$ and set $l_{n}=\sigma_{f}^{n}(l_{0})$ for $n\geq 1$. Then $l=\cup_{n=0}^{\infty}l_{n}$ is a curve in $T_f$
 connecting all the points $\{\tau_{n}\}_{n=0}^{\infty}$. For each point $\tilde{\tau}_{0}\in l_{0}$, we have $a_{\tilde{\tau}_{0}} <1$. Taking the maximum  gives  a uniform $a<1$ for all points in $l_0$.  Since $\sigma$ is holomorphic, $a$ is an upper bound for all points in $l$.  Therefore,

$$
d_{T} (\tau_{n+1}, \tau_{n}) \leq a \, d_{T}(\tau_{n}, \tau_{n-1})
$$
for all $n\geq 1$.
Hence, $\{ \tau_{n}\}_{n=0}^{\infty}$ is a convergent sequence with a unique limit point $\tau_{\infty}$ in $T_{f}$ and $\tau_{\infty}$  is
a fixed point of $\sigma$. This together with Lemma~\ref{fixedpt} completes the proof of  sufficiency in Theorem~\ref{main1}.
\end{proof}

From our proof of Theorem~\ref{necc},  the final step in the  proofs of Theorems~\ref{main2} and~\ref{main3} is to show   that 
the compactness condition follows from bounded geometry.  This is in contrast to  the case of rational maps (see~\cite{Ji}) where the bounded geometry condition always guarantees the compactness condition holds. In the case of $(p,q)$-exponential maps, 
the bounded geometry condition must be combined with some topological constraints to guarantee the compactness. For Theorems~\ref{main2} and ~\ref{main3} we define  topological constraints that, together with the bounded geometry condition, control the sizes of the fundamental domains coming from the decomposition of $\mathbb C$ described Definition~\ref{topexpdef} so that they are neither too small nor too big. Thus, before we prove  bounded geometry implies the  compactness condition holds, we will describe these  topological constraints for the maps in these theorems.  

\section{Topological Constraints.}
\label{sec:proofmt}
In section~\ref{sec:Tpq} we defined two different  normalizations   for functions in $\TE_{p,q}$ that depend on whether or not $0$ is a fixed point of the map.  The topological constraints 
for post-singularly finite maps also follow this dichotomy.    

\subsection{For $f\in \TE_{0,1}$ satisfying the hypotheses of Theorem~\ref{main3}.}
\label{top1}
Any such $f$ has no branch points  so $P_{f} =\cup_{k\geq 0} f^{k} (0) \cup\{\infty\}$.  Since $0$ is omitted and $P_{f}$ is  finite, the orbit of $0$ is pre-periodic. Let $c_{k}=f^{k}(0)$ for $k\geq 0$. By the pre-periodicity, there are a minimal integer $k_{1}\geq 0$ and a minimal integer $l\geq 1$ such that 
$f^{l}(c_{k_{1}+1}) =   c_{k_{1}+1}$. This says that 
$$
\{ c_{k_{1}+1}, \ldots, c_{k_{1}+l}\}
$$
is a periodic orbit of period $l$. Let $k_{2} =k_{1}+l$.  
  Let $\gamma$ be a continuous curve 
connecting $c_{k_{1}}$ and $c_{k_{2}}$ in 
${\mathbb R}^{2}$ disjoint from $P_f$,  except for its endpoints.  Because 
$$
f(c_{k_{1}})=f(c_{k_{2}})=c_{k_{1}+1},
$$ 
the image curve $\delta= f(\gamma)$  is a closed curve.  

\medskip
\subsection{For $f\in \TE_{p,1}$ satisfying the hypotheses of Theorem~\ref{main2}.}
\label{top2}
Any such $f$ has exactly one non-zero simple branch point which we denote by $c$; 
$0$ is the only other branch point and it  has multiplicity $p-1$.  Then $f(0)=0$ and by our normalization, $f(c)=1$.  In this case
$$
P_{f} =\cup_{k\geq 1} f^{k} (c) \cup\{0, \infty\}.
$$
Again by the hypothesis of Theorem~\ref{main2},  $P_{f}$ is finite. Set $c_{k}=f^{k}(c)$ for $k\geq 0$. 

Suppose $c$ is not periodic. As above, there are an minimal integer $k_{1}\geq 0$ and a minimal integer $l\geq 1$ such that 
$f^{l}(c_{k_{1}+1}) =   c_{k_{1}+1}$.  Again,  
$$
\{ c_{k_{1}+1}, \ldots, c_{k_{1}+l}\}
$$
is a periodic orbit of period $l$. Let $k_{2} =k_{1}+l$.  

As above, let $\gamma$ be a continuous curve 
connecting $c_{k_{1}}$ and $c_{k_{2}}$ in 
${\mathbb R}^{2}$ disjoint from $P_f$,  except for its endpoints. Since 
$$
f(c_{k_{1}})=f(c_{k_{2}})=c_{k_{1}+1},
$$ 
the image curve $\delta= f(\gamma)$  is a closed curve.  

\subsection{Winding number}
\label{winding}
In each of the above cases, the {\em winding number} $\eta$ of the closed curve $\delta= f(\gamma)$ about $0$ essentially counts the number of fundamental domains 
between $c_{k_{1}}$ and $c_{k_{2}}$  and defines the ``distance'' between these fundamental domains.    
The following lemma is a crucial to proving that the compactness condition holds for each type of function in Theorem~\ref{main2} and Theorem~\ref{main3}.   

 \medskip 
\begin{lemma}\label{winding1}
The winding number $\eta$ is  does not change  under the Thurston iteration procedure.
\end{lemma}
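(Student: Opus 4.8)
The plan is to carry a single marked arc through the whole tower of iterates and to observe that the winding number cannot change simply because every conjugating map fixes $0$ and $\infty$ and preserves orientation. In either case of \S\ref{top1} or \S\ref{top2} we have $f(c_{k_1})=f(c_{k_2})=c_{k_1+1}$, so first I would fix once and for all the arc $\gamma$ joining $c_{k_1}$ to $c_{k_2}$, chosen (as we may) to be disjoint from $P_f$ except at its endpoints and also disjoint from $f^{-1}(0)$. Then $\delta=f(\gamma)$ is a closed curve in $\mathbb{R}^2\setminus\{0\}$ and $\eta=\mathrm{wind}(\delta,0)$ is well defined. Given a base point $\tau_0=[\mu_0]$, I would choose the representatives $\mu_{n+1}=f^{*}\mu_n$ of $\tau_{n+1}=\sigma^{n+1}(\tau_0)$, so that the commutative diagram of Section~\ref{sec:suff} holds exactly, namely $E_n\circ w^{\mu_{n+1}}=w^{\mu_n}\circ f$ on $\mathbb{R}^2$. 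At the $n$-th stage I set $\gamma_{n+1}=w^{\mu_{n+1}}(\gamma)$, which joins $w^{\mu_{n+1}}(c_{k_1})$ to $w^{\mu_{n+1}}(c_{k_2})$ inside $\mathbb{R}^2\setminus P_{f,n+1}$; since both endpoints are carried by $E_n$ to $w^{\mu_n}(c_{k_1+1})$, the image $\delta_n=E_n(\gamma_{n+1})$ is again a closed curve, and $\eta_n=\mathrm{wind}(\delta_n,0)$ is the winding number attached to $E_n$.

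The key step is the identity $\delta_n=w^{\mu_n}(\delta)$. Applying the commutation relation,
\[
\delta_n=E_n\bigl(\gamma_{n+1}\bigr)=E_n\bigl(w^{\mu_{n+1}}(\gamma)\bigr)=w^{\mu_n}\bigl(f(\gamma)\bigr)=w^{\mu_n}(\delta).
\]
Because $w^{\mu_n}$ is a global homeomorphism of $\hat{\mathbb C}$ fixing $0$, and $\delta$ avoids $0$, the image $\delta_n$ also avoids $0$, so $\eta_n$ makes sense.

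It then remains to check that carrying $\delta$ through $w^{\mu_n}$ does not alter its winding number about $0$. The map $w^{\mu_n}$ restricts to an orientation-preserving self-homeomorphism of $\hat{\mathbb C}\setminus\{0,\infty\}=\mathbb{C}^{*}$ that does not interchange the two ends, hence induces the identity on $H_1(\mathbb{C}^{*})\cong\mathbb{Z}$; and the winding number about $0$ is exactly the homology class of a loop in $\mathbb{C}^{*}$. Therefore
\[
\eta_n=\mathrm{wind}(\delta_n,0)=\mathrm{wind}\bigl(w^{\mu_n}(\delta),0\bigr)=\mathrm{wind}(\delta,0)=\eta
\]
for every $n$, which is the assertion of the lemma. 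Geometrically this records that the number of fundamental domains of $E_n$ separating the two marked preimages, which $\eta_n$ counts by the covering description of Definition~\ref{topexpdef}, is the same integer $\eta$ along the whole orbit.

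The part that needs care, rather than being a genuine obstacle, is the bookkeeping. One must verify that $\gamma$ can indeed be chosen to miss $f^{-1}(0)$ so that all the $\delta_n$ stay off $0$, and that transporting the marking by $w^{\mu_n}$ is legitimate: replacing $\mu_n$ by another representative of the same class in $T_f$ changes $\gamma_{n+1}$ and $\delta_n$ only by a homotopy rel $\{0\}$, and so leaves $\eta_n$ unchanged. I expect the only mild subtlety to be confirming this homotopy invariance under change of representative, after which the constancy of $\eta$ along the Thurston orbit is immediate from the orientation-preserving, $0$- and $\infty$-fixing nature of the maps $w^{\mu_n}$.
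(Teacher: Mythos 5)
Your proposal is correct and follows essentially the same route as the paper: both establish the key identity $\delta_n=E_n(w^{\mu_{n+1}}(\gamma))=w^{\mu_n}(f(\gamma))=w^{\mu_n}(\delta)$ from the commutative diagram and then conclude that the normalized quasiconformal homeomorphism $w^{\mu_n}$, fixing $0$ and $\infty$, preserves the winding number about $0$. Your added justification via the induced identity on $H_1(\mathbb{C}^{*})$ and the remarks on choice of representative are harmless elaborations of what the paper leaves implicit.
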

\begin{proof}

Given $\tau_{0}=[\mu_{0}]\in T_f$, let $\tau_{n}=\sigma^{n} (\tau_{0}) =[\mu_{n}]$
be the sequence generated by $\sigma$. Let $w^{\mu_{n}}$ be the normalized quasiconformal map with Beltrami coefficient $\mu_{n}$.
Then in either of the above situations, 
$$
E_{n} = w^{\mu_{n}}\circ f\circ (w^{\mu_{n+1}})^{-1}\in {\mathcal E}_{p, 1} 
$$
since it  preserves $\mu_0$ and  is holomorphic. See the following diagram.
$$
\begin{array}{ccc} \hat{\mathbb C}& {\buildrel w^{\mu_{n+1}} \over
\longrightarrow} & \hat{\mathbb C}\cr
\downarrow f &&\downarrow E_{n}\cr
\hat{\mathbb C}& {\buildrel w^{\mu_{n}}
\over \longrightarrow} & \hat{\mathbb C}.
\end{array}
$$
Let $c_{k, n} =w^{\mu_{n}}(c_{k})$. 
The continuous curve
$$
\gamma_{n+1}=w^{\mu_{n+1}} (\gamma)
$$
goes from  $c_{k_{1}, n+1}$ to $c_{k_{2}, n+1}$.  
The image curve is 
$$
\delta_{n}=E_{n} (\gamma_{n+1}) =   w^{\mu_{n}}(f((w^{\mu_{n+1}})^{-1} (\gamma_{n+1})))=w^{\mu_{n}} (f (\gamma)) = w^{\mu_{n}} (\delta).
$$
Note that $w^{\mu_{n}}$ is a homeomorphism that fixes $0,1,\infty$. Thus $\delta_{n}$ is a closed curve  through the point $c_{k_{1}+1, n}= w^{\mu_{n}} (c_{k_{1}+1})$ and it has    winding number $\eta$ around $0$.
\end{proof}
 
\subsection{For $f\in \T2$ satisfying the hypotheses of Theorem~\ref{main2}.}
\label{top3}

Suppose $f \in \T2$.  Recall $\Omega_{f}=\{0, \lambda\}$ is the set of asymptotic values of $f$ and that we have normalized so that $f(0)=1$. 
Suppose that this $f$ is post-singularly finite;  that is, $P_{f}$ is finite so that 
the orbits $\{c_{k}=f^{k} (0)\}_{k=0}^{\infty}$ and $\{c_{k}'=f^{k} (\lambda)\}_{k=0}^{\infty}$ are both finite, and thus, preperiodic.  Note that neither can be  periodic because the asymptotic values are omitted.    Consider the orbit of $0$.  Preperiodicity  means there are  minimal integers $k_{1}\geq 0$ and  $l\geq 1$ such that $f^{l}(c_{k_{1}+1}) =   c_{k_{1}+1}$.  That is, 
$$
\{ c_{k_{1}+1}, \ldots, c_{k_{1}+l}\}
$$
is a periodic orbit of period $l$. Set $k_{2} =k_{1}+l$.  

Let $\gamma$ be a continuous curve connecting $c_{k_{1}}$ to $c_{k_{2}}$ in 
${\mathbb R}^{2}$ which is disjoint from $P_f$, except at its endpoints. 
Because $f(c_{k_{1}})=f(c_{k_{2}})=c_{k_{1}+1}$,  the image curve $\delta= f(\gamma)$  is a closed curve.  We can choose $\gamma$ once and for all  such that $\delta$ separates $0$ and $\lambda$;  that is, so that $\delta$ is a non-trivial curve closed curve in $\hat{\C} \setminus \{0, \lambda\}$.    

\subsection{Homotopy class}
\label{class}

The fundamental group $\pi_{1}(\hat{\C} \setminus \{0, \lambda\})={\mathbb Z}$ so  the homotopy class $\eta=[\delta]$  
in the fundamental group is an integer which essentially counts the number of fundamental domains 
between $c_{k_{1}}$ and $c_{k_{2}}$  and defines a ``distance'' between the fundamental domains. The integer $\eta$ depends only on the choice of $\gamma$ and since $\gamma$ is fixed, so is $\eta$.   

We now show that $\eta$ is an invariant of the Thurston iteration procedure and is thus a topological constraint on the iterates.  
 
\medskip 
\begin{lemma}\label{winding2}
Given $\tau_{0}=[\mu_{0}]\in T_f$, let $\tau_{n}=\sigma^{n} (\tau_{0}) =[\mu_{n}]$
be the sequence generated by $\sigma$. Let $w^{\mu_{n}}$ be the normalized quasiconformal map with Beltrami coefficient $\mu_{n}$ Let  $\gamma_{n+1}=w^{\mu_{n+1}}(\gamma)$, $\delta_{n}= w^{\mu_{n}}(\delta)$ and $\lambda_n=w^{\mu_{n}}(\lambda)$.    Then $[\delta_{n}] \in \pi_{1}(\hat{\C} \setminus \{0, \lambda_{n}\})= \eta$ for all $n$.   
\end{lemma}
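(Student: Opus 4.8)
The plan is to mimic the proof of Lemma~\ref{winding1} almost verbatim, since the only new ingredient here is that we must track the homotopy class in the punctured sphere $\hat{\C}\setminus\{0,\lambda_n\}$ rather than a winding number about a single point. First I would set up the commutative diagram identical to the one in Lemma~\ref{winding1}: for the sequence $\tau_n=\sigma^n(\tau_0)=[\mu_n]$, the maps $E_n=w^{\mu_n}\circ f\circ(w^{\mu_{n+1}})^{-1}$ are holomorphic (here elements of $\M_2$ by Proposition~\ref{prop2}), with $w^{\mu_n}$ normalized to fix $0,1,\infty$. The key structural fact is that $w^{\mu_n}$ sends the asymptotic value $\lambda$ to $\lambda_n=w^{\mu_n}(\lambda)$, so the relevant target for the $n$-th stage is the thrice(-or-more)-punctured sphere with the moving puncture at $\lambda_n$.

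The computational heart is the same chase as before: with $\gamma_{n+1}=w^{\mu_{n+1}}(\gamma)$ running from $c_{k_1,n+1}$ to $c_{k_2,n+1}$, one computes
\begin{equation*}
\delta_n=E_n(\gamma_{n+1})=w^{\mu_n}\big(f((w^{\mu_{n+1}})^{-1}(\gamma_{n+1}))\big)=w^{\mu_n}(f(\gamma))=w^{\mu_n}(\delta).
\end{equation*}
So $\delta_n$ is literally the image under the homeomorphism $w^{\mu_n}$ of the original closed curve $\delta$. Since $\delta$ was chosen (in \S\ref{top3}) to separate $0$ from $\lambda$, and $w^{\mu_n}$ is a homeomorphism of $S^2$ carrying $0\mapsto 0$ and $\lambda\mapsto\lambda_n$, the image $\delta_n$ separates $0$ from $\lambda_n$, hence is a non-trivial loop in $\hat{\C}\setminus\{0,\lambda_n\}$.

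To conclude $[\delta_n]=\eta$ I would invoke the fact that a homeomorphism of the sphere fixing (or tracking) the two punctures induces an isomorphism on fundamental groups, and that $\pi_1(\hat{\C}\setminus\{0,\lambda_n\})\cong\ZZ$ canonically, with the generator oriented by the pair $(0,\lambda_n)$; the orientation-preserving homeomorphism $w^{\mu_n}$ then carries the generator of $\pi_1(\hat{\C}\setminus\{0,\lambda\})$ to that of $\pi_1(\hat{\C}\setminus\{0,\lambda_n\})$, so the integer class is preserved. This identifies $[\delta_n]$ with $[\delta]=\eta$ as elements of $\ZZ$ under the canonical identification.

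The main obstacle, and the only place requiring genuine care beyond the verbatim curve-chase, is the well-definedness of comparing homotopy classes across different target surfaces $\hat{\C}\setminus\{0,\lambda_n\}$ whose puncture $\lambda_n$ moves with $n$. I must make precise the canonical identification of each $\pi_1(\hat{\C}\setminus\{0,\lambda_n\})$ with $\ZZ$ so that the statement ``$[\delta_n]=\eta$'' is meaningful; the clean way is to observe that $w^{\mu_n}$ is an ambient orientation-preserving homeomorphism of $(S^2;0,\infty,\lambda)\to(S^2;0,\infty,\lambda_n)$ tracking the punctures, which induces the requisite canonical isomorphism on $\pi_1$ commuting with the $\ZZ$-identifications, so no ambiguity in the choice of generator arises. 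Once this bookkeeping is fixed, the result is immediate from $\delta_n=w^{\mu_n}(\delta)$ and the homeomorphism invariance of the homotopy class.
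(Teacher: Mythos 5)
Your proposal follows the paper's own proof essentially verbatim: the same curve-chase showing $\delta_n = g_n(\gamma_{n+1}) = w^{\mu_n}(\delta)$, followed by the observation that the normalized homeomorphism $w^{\mu_n}:\hat{\C}\setminus\{0,\lambda\}\to\hat{\C}\setminus\{0,\lambda_n\}$ preserves homotopy classes. Your extra care about canonically identifying the fundamental groups of the varying punctured spheres with $\ZZ$ is a point the paper leaves implicit, but it is the same argument.
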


\begin{proof}
The iteration defines the map 
$$
g_{n} = w^{\mu_{n}}\circ f\circ (w^{\mu_{n+1}})^{-1}\in \T2
$$
which is holomorphic since it  preserves $\mu_0$.  The continuous curve
$$
\gamma_{n+1}=w^{\mu_{n+1}} (\gamma)
$$
goes from  $c_{k_{1}, n+1}= w^{\mu_{n+1}} (c_{k_{1}})$ to $c_{k_{2}, n+1}= w^{\mu_{n+1}} (c_{k_{2}})$.  
The image curve
$$
\delta_{n}=g_{n} (\gamma_{n+1}) =   w^{\mu_{n}}(f((w^{\mu_{n+1}})^{-1} (\gamma_{n+1})))=w^{\mu_{n}} (f (\gamma)) = w^{\mu_{n}} (\delta)
$$
is a closed curve through the point $c_{k_{1}+1, n}=w^{\mu_{n}}(c_{k_{1}+1})$.

From our normalization, it follows that 
\begin{equation}\label{g}
g_{n}(z) =g_{\alpha_{n}, \beta_{n}}(z)=\frac{\alpha_{n} e^{\beta_{n}z}}{(\alpha_{n}-\frac{1}{\alpha_{n}})e^{\beta_{n}z} +\frac{1}{\alpha_{n}}}.
\end{equation}
and $0$ is an omitted value for $g_{n}$.   
Since $\lambda_n=w^{\mu_{n}}(\lambda)$,  it is 
also omitted for $g_{n}$ and
\begin{equation}\label{omit}
\lambda_{n} =\frac{\alpha_{n}}{\alpha_{n}-\frac{1}{\alpha_{n}}}\in P_{f,n} =w^{\mu_{n}} (P_{f}).
\end{equation}
Because
$$
w^{\mu_{n}}: \hat{\C}\setminus \{0, \lambda\}\to \hat{\C}\setminus \{0, \lambda_{n}\}
$$
is a normalized homeomorphism, it preserves homotopy classes and $\eta=[\delta_{n}] \in \pi_{1}(\hat{\C} \setminus \{0, \lambda_{n}\})={\mathbb Z}$. Thus the homotopy class of $\delta_{n}$ in the space $\hat{\C} \setminus \{0, \lambda_{n}\}$ is the same throughout the iteration. 
\end{proof}

Note that when $f\in \T2$ with $\lambda=\infty$, it is also in $\TE_{0,1}$. So the homotopy class defined in this section is the same as the winding number defined in subsection~\ref{winding}.

\section{Compactness.}
\label{compactness}
The arguments  that the invariance  under the Thurston iteration scheme of the winding number and the homotopy class together with the  bounded geometry condition  imply  compactness are different in the proofs of Theorem~\ref{main2} and Theorem~\ref{main3}.  We present these arguments in the two subsections below.  Recall that 
$$
P_{f, n} =w^{\mu_{n}} (P_{f}), \quad n=0, 1, 2, \ldots.
$$

\subsection{The proof of Theorem~\ref{main2}.}
\label{comp1} 
For such a map,  $f(0)=0$, $0$ is a branch point of multiplicity $p-1$ and $f$ has exactly one non-zero branch point $c$ with $f(c)=1$. All the functions in the Thurston iteration have the form
$$
E_{n} (z) = \alpha_{n} z^{p} e^{\lambda_{n} z}, \quad
\alpha_{n}=e^{p} \Big( -\frac{\lambda_{n}}{p}\Big)^{p}.
$$
Note that $E_{n}(0)=0$ and $0$ is a critical point of multiplicity $p-1$. It is also the asymptotic value and hence it has no other pre-images.  
Moreover, $E_{n}(z)$ has exactly one non-zero simple critical point 
$$
c_{n}=-\frac{p}{\lambda_{n}}=w^{\mu_{n}}(c)
$$
and $\alpha_{n}$ is defined by the normalization condition $E_{n}(c_{n})=1$.
    
If $c$ is periodic, then $c\in P_{f}$. This implies that $c_{n} (\not= 0, \infty) \in P_{f, n}$ and thus its spherical distance from either $0$ or $\infty$ is bounded below.  That is,  there are two constants $0<\kappa<K<\infty$ such that
$$
\kappa\leq |\lambda_{n}|\leq K, \quad \;\forall n>0.
$$   
This implies that the sequence $\{E_{n}\}_{n=1}^{\infty}$ is contained in a compact subset. 

Now suppose $c$ is not periodic. By the the hypotheses in Theorem~\ref{main2}, $f(c)=1$ is also not periodic. 
This implies that $k_{1}\geq 1$. 

We have 
$$
 0,  \quad 1=E_{n}(c_{n}), \quad E_{n}(1) = e^{p} \Big( -\frac{\lambda_{n}}{p}\Big)^{p} e^{\lambda_{n}} \in P_{f, n}.
$$
Let $c_{k, n} =w^{\mu_{n}}(c_{k})$. Then $c_{k,n}\in P_{f, n}$ for all $k\geq 1$.  
Let $\gamma_{n} =w^{\mu_{n}}(\gamma)$ and  $\delta_{n} =w^{\mu_{n}}(\delta)$ 

When $f$ has bounded geometry,  since $E_{n}(1) \not=0$, its spherical   distance from $0$ is bounded below.   This implies that  the sequence $\{|\lambda_{n}|\}$ is bounded below; that is, there is a constant $\kappa>0$ such that
$$
\kappa\leq |\lambda_{n}|, \;\; \forall n>0.
$$

By our hypothesis,  $c_{k_{1},n+1}\not= c_{k_{2},n+1}$ both belong to $P_{f, n+1}$ and bounded geometry  implies there are two constants, which we again  denote by $\kappa, K$ with 
$0<\kappa< K<\infty$, such  that 
$$
\kappa\leq |c_{k_{2}, n+1}|, |c_{k_{1}, n+1}|\leq K \;\;\hbox{and} \;\;  \kappa \leq  |c_{k_{2}, n+1}-c_{k_{1},n+1}| \leq K, \quad \forall\; n\geq 1.
$$
 
Now we prove that the sequence $\{|\lambda_{n}|\}$ is also bounded above. 
Recall that when we chose $\gamma$, we assumed it did not go through $0$ and thus by the normalization, 
none of the $\gamma_{n+1}$ go through $0$ either.  Therefore, for each $n$ we can find a simply connected domain 
$D_{n+1} \supset \gamma_{n+1}$ that does not contain $0$. 
Now we compute 
$$
\eta=  \frac{1}{2\pi i} \oint_{\delta_{n}} \frac{1}{w} dw = \frac{1}{2\pi i} \int_{\gamma_{n+1}} \frac{E_{n}'(z)}{E_{n}(z)} dz
= \frac{1}{2\pi i} \int_{\gamma_{n+1}} \Big(\frac{p}{z} +\lambda_{n}\Big)dz
$$
so that 
 $$
2 \pi  i \eta  = \int_{\gamma_{n+1}} \frac{p}{z} dz +\int_{\gamma_{n+1}} \lambda_{n} dz = \int_{\gamma_{n+1}} \frac{p}{z} dz + \lambda_{n} (c_{k_{2},n+1}-c_{k_{1},n+1}).
$$
Rewriting we have 
$$
\lambda_{n} (c_{k_{2},n+1}-c_{k_{1},n+1}) =2 \pi  i \eta - \int_{\gamma_{n+1}} \frac{p}{z} dz.
$$
This implies that
$$
\kappa |\lambda_{n}| \leq |\lambda_{n} (c_{k_{2},n+1}-c_{k_{1},n+1})| \leq  2 \pi  \eta + \Big|\int_{\gamma_{n+1}}\frac{p}{z} dz\Big|.  
$$
Thus,  if we can bound the integral on the right we will be done. 

Notice that  $\log z$ can be defined as an analytic function on the simply connected domain $D_{n+1}$ containing $\gamma_{n+1}$ that we chose above.  
We take $\log z =\log |z| +2\pi i \arg (z)$ as the principal branch, with $0\leq \arg (z)<2\pi.$  
We then estimate 
$$
\Big|\int_{\gamma_{n+1}}\frac{p}{z} dz\Big|=| \log c_{k_{2},n+1} - \log c_{k_{1},n+1}|
$$
$$
\leq |\log |c_{k_{2},n+1}|-\log |c_{k_{1},n+1}| | + |\arg(c_{k_{2},n+1}) - \arg (c_{k_{1},n+1})| 
$$
$$
\leq (\log K -\log \kappa) +4\pi .
$$
 Finally we have  
 $$
 |\lambda_{n}|  \leq \frac{2\pi \eta +(\log K-\log \kappa)  +4\pi}{\kappa}.
 $$ 
which proves that $\{ E_{n}(z)\}_{n=0}^{\infty}$ is contained in a compact subset in $\E_{p,1}$. 
This combined with Theorem~\ref{main1} completes the proof of Theorem~\ref{main2} for $f\in \TE_{p,1}$.  

\subsection{Proof of Theorem~\ref{main3}.}
\label{comp2}

By hypothesis $f$ has bounded geometry and by the normalization of $f$,   $\Omega_f=\{0, \lambda\}$, $f(0)=1$ so that $\{0,1, \lambda, \infty\} \subset P_f$.
Moreover the iterates $$
g_{n}=w^{\mu_{n}}\circ f\circ (w^{\mu_{n+1}})^{-1}
$$
belong to  $\M_2$.

Recall  that $P_{f, n} =w^{\mu_{n}} (P_{f})$ and because 
   $w^{\mu_n}$ fixes $\{0, 1, \infty\}$ for all $n\geq 0$,
$\{0, 1, \infty\}\subset P_{f, n}$.
By equation (\ref{g}),
$$
g_{n}(1) =w^{\mu_{n}}(f(1))=  \frac{\alpha_{n} e^{\beta_{n}}}{(\alpha_{n}-\frac{1}{\alpha_{n}})e^{\beta_{n}} +\frac{1}{\alpha_{n}}}\in P_{f, n}.
$$
so that
$$
\{0, 1, \lambda_{n}, g_{n}(1), \infty\} \subseteq P_{f, n}.
$$

In  the case that $\lambda=\infty$, $f$ is in ${\mathcal TE}_{0,1}$. Then we have that all the functions in the Thurston iteration have the form $g_{n} (z) = e^{\lambda_{n} z}$.   From our normalization, we have  
$$
0, \; 1=g_{n}(0),\;  g_{n}(1) =e^{\lambda_{n}} \in P_{f, n+1}.
$$  
If $\#(P_f)=3$, then $f(1)=1$. In this case, $g_{n}(1)=1$ for all $n\geq 0$  and $\#(P_{f, n})=3$ so that 
  $g_{n} (z)=e^{2\pi m_{n} z}$. 
The homotopy class of $\delta_{n}$ is determined by  $\eta$, the  winding number about the origin in the complex analytic sense. Thus $m_{n}=\eta$ for all $n$
and $g_{n}=e^{2\pi i \eta z}$, 
which is  fixed under  Thurston iteration and trivially lies in a compact  subset in $\E_{0,1}\subset \M2$.

Now suppose $\#(P_{f})\geq 4$. In this case $g_{n}(1) \neq1$.  When $f$ has bounded geometry,  
the spherical distance between $1$ and  $g_{n}(1)$ is bounded away from zero.  
That is,  there is a constant $\kappa>0$ such that
$$
\kappa \leq |\lambda_{n}|, \;\; \forall n\geq 0.
$$
 
Now we prove that the sequence $\{|\lambda_{n}|\}$ is also bounded above.  Again we compute
$$
\eta = \frac{1}{2\pi i} \oint_{\delta_{n}} \frac{1}{w} dw = \frac{1}{2\pi i} \int_{\gamma_{n+1}} \frac{g_{n}'(z)}{g_{n}(z)} dz= \frac{1}{2\pi i} \int_{\gamma_{n+1}} \lambda_{n}dz.
$$
The integral therefore depends only on the endpoints and  we have 
$$
\eta= \frac{1}{2\pi i} \int_{\gamma_{n+1}} \lambda_{n}dz=\frac{\lambda_{n}}{2\pi i} (c_{k_{2}, n+1}-c_{k_{1}, n+1}). 
$$  
Since $0$ is omitted, it cannot be periodic.  Therefore, both $c_{k_{2}, n+1}\not= c_{k_{1}, n+1}\in P_{f, n+1}$;  
 by bounded geometry therefore,   there is a positive constant which we again  denote by $\kappa$ such that 
$$
|c_{k_{2}, n+1}-c_{k_{1}, n+1}|\geq \kappa. 
$$ 
This gives us the estimate 
$$
|\lambda_{n}| \leq \frac{2\pi \eta}{|c_{k_{2}, n+1}-c_{k_{1}, n+1}|}\leq \frac{2\pi \eta}{\kappa} 
$$
which proves that $\{ g_{n}(z)\}_{n=0}^{\infty}$ is contained in a compact family in $\E_{0,1}\subset \M2$. 

Now let us prove  compactness of the iterates when $\lambda\not=\infty$. 
In this case, since
$$
\lambda_{n} =\frac{\alpha_{n}}{\alpha_{n}-\frac{1}{\alpha_{n}}}\in P_{f, n+1}
$$
has a definite spherical distance from $0$, $1$, and $\infty$, bounded geometry implies there are  two constants $0< k<K < \infty$ such that
$$ 
k\leq |\alpha_{n}|, \;\;|\alpha_{n} -1| \leq K, \quad \forall \; n\geq 0.
$$
In this case, we also know  that $g_{n} (1)\not=1$. Since $g_{n} (1) \in P_{f, n+1}$, bounded geometry implies that the constant $k$ can be chosen  such that
$$
k\leq |\beta_{n}|, \;\; \forall n>0.
$$

Again we use the topological constraint to prove that $\{|\beta_{n}|\}$ is also bounded from above. 
Let 
$$
M_{n} (z) =\frac{\alpha_{n} z}{(\alpha_{n}-\frac{1}{\alpha_{n}})z +\frac{1}{\alpha_{n}}}
$$
so that $g_{n}(z) = M_{n} (e^{\beta_{n}z})$.
The map $M_{n}: \hat{\C}\setminus \{ 0, \infty\} \to \hat{\C}\setminus \{0, \lambda_{n}\}$ is a homeomorphism so it induces an isomorphism from the fundamental group $\pi_{1}(\hat{\C}\setminus \{ 0, \infty\})$ to the fundamental group  $\pi_{1}(\hat{\C}\setminus \{ 0, \lambda_{n}\})$. Thus,  the homotopy class $[\tilde{\delta}_{n}]$ where $\tilde{\delta}_{n}= M_{n}^{-1} (\delta_{n})$, is given by an integer $\eta$.

 Note that $\tilde{\delta}_{n}$ is the image of $\gamma_{n+1}$ under $\widetilde{g}_{n}(z)=e^{\beta_{n} z}$. Since 
$\widetilde{\delta}_{n}$ is a closed curve in $\hat{\C}\setminus \{ 0, \infty\}$, $\eta$ is the winding number of $\tilde{\delta}_{n}$ about the origin  in the complex analytic sense,  and we can compute
$$
\eta = \frac{1}{2\pi i} \oint_{\widetilde{\delta_{n}}} \frac{dw}{w} =\frac{1}{2\pi i} \int_{\gamma_{n+1}} \frac{\tilde{g}_{n}'(z)}{\widetilde{g}_{n} (z)} dz=\frac{\beta_{n}}{2\pi i} (c_{k_{2}, n+1}-c_{k_{1}, n+1}).
$$  
As above,  $c_{k_{2}, n+1}, c_{k_{1}, n+1}\in P_{f, n+1}$,  and by bounded geometry there is a constant $k>0$ such that 
$$
|c_{k_{2}, n+1}-c_{k_{1},n+1}|\geq k,
$$ 
 so that 
$$
 |\beta_{n}| \leq \frac{2\pi \eta}{|c_{k_{2}, n+1}-c_{k_{1}, n+1}|}\leq \frac{2\pi \eta}{k}. 
$$
This inequality proves that $\{ g_{n} (z) = g_{\alpha_{n}, \beta_{n}}(z)\}$ forms a compact subset in $\M2$.

Finally, we  have shown that in all cases  the sequence $\{g_{n}\}$ is a compact subset in $\M2$. This combined with Theorem~\ref{main1} completes the proof of Theorem~\ref{main3}.

 \bigskip

\vspace*{20pt}
\noindent Tao Chen, Department of Mathematics, Engineering and Computer Science, 
Laguardia Community College, CUNY,  
31-10 Thomson Ave. Long Island City, NY 11101.
Email: tchen@lagcc.cuny.edu

\vspace*{5pt}
\noindent Yunping Jiang, Department of Mathematics, Queens College of CUNY,
Flushing, NY 11367 and Department of Mathematics, CUNY Graduate
School, New York, NY 10016. 
Email: yunping.jiang@qc.cuny.edu

\vspace*{5pt}
\noindent Linda Keen, Department of Mathematics, Lehman College of CUNY,
Bronx, NY 10468
and Department of Mathematics, CUNY Graduate
School, New York, NY 10016.
Email: LINDA.KEEN@lehman.cuny.edu

\end{document}